\newcommand{\abs}[1]{\left|#1\right|}
\newcommand{\bdry}[1]{\partial #1}
\newcommand{\bgset}[1]{\big\{#1\big\}}
\newcommand{\A}{{\mathcal A}}
\newcommand{\B}{{\mathcal B}}
\newcommand{\F}{{\mathcal F}}
\newcommand{\closure}[1]{\overline{#1}}
\newcommand{\dist}[2]{\text{dist}\, (#1,#2)}
\newcommand{\eps}{\varepsilon}
\newcommand{\half}{\frac{1}{2}}
\newcommand{\id}[1][]{id_{\, #1}}
\newcommand{\incl}{\subset}
\newcommand{\M}{{\mathcal M}}
\newcommand{\N}{\mathbb N}
\newcommand{\norm}[2][]{\left\|#2\right\|_{#1}}
\renewcommand{\O}{\text{O}}
\newcommand{\PS}[1]{$(\text{PS})_{#1}$}
\newcommand{\pnorm}[2][]{\if #1'' \left|#2\right|_p \else \left|#2\right|_{#1} \fi}
\newcommand{\R}{\mathbb R}
\newcommand{\RP}{\R \text{P}}
\newcommand{\restr}[2]{\left.#1\right|_{#2}}
\newcommand{\seq}[1]{\left(#1\right)}
\newcommand{\set}[1]{\left\{#1\right\}}
\newcommand{\strictsubset}{\subset \subset}
\newcommand{\Z}{\mathbb Z}
\DeclareMathOperator{\supp}{supp}
\newenvironment{enumroman}{\begin{enumerate}

}{\end{enumerate}}
\newtheorem{lemma}{Lemma}[section]
\newtheorem{proposition}[lemma]{Proposition}
\newtheorem{theorem}[lemma]{Theorem}
\theoremstyle{definition}
\newtheorem*{notations}{Notations}
\theoremstyle{remark}
\numberwithin{equation}{section}
\title[The Brezis-Nirenberg problem for the fractional $p$-Laplacian]{The Brezis-Nirenberg problem for \\ the fractional $p$-Laplacian}
\author[S.\ Mosconi]{Sunra Mosconi}
\author[K.\ Perera]{Kanishka Perera}
\author[M.\ Squassina]{Marco Squassina}
\author[Y.\ Yang]{Yang Yang}
\address[S.\ Mosconi]{Dipartimento di Informatica
	\newline\indent
	Universit\`a degli Studi di Verona
	\newline\indent
	Verona, Italy}
\email{sunrajohannes.mosconi@univr.it}
\address[K. Perera]{Department of Mathematical Sciences
	\newline\indent
	Florida Institute of Technology
	\newline\indent
	150 W University Blvd, Melbourne, FL 32901, USA}
\email{kperera@fit.edu}
\address[M.\ Squassina]{Dipartimento di Informatica
	\newline\indent
	Universit\`a degli Studi di Verona
	\newline\indent
	Verona, Italy}
\email{marco.squassina@univr.it}
\address[Y. Yang]{School of Science
	\newline\indent
	Jiangnan University
	\newline\indent
	Wuxi, Jiangsu 214122, China}
\email{yynjnu@126.com}
\subjclass[2010]{Primary 35R11, 35J92, 35B33, Secondary 35A15}
\keywords{Fractional $p$-Laplacian, Brezis-Nirenberg problem, nontrivial solutions, variational methods, cohomological index}
\begin{document}

\begin{abstract}
We obtain nontrivial solutions to the Brezis-Nirenberg problem for the fractional $p$\nobreakdash-Laplacian operator, extending some results in the literature for the fractional Laplacian. The quasilinear case presents two serious new difficulties. First an explicit formula for a minimizer in the fractional Sobolev inequality is not available when $p \ne 2$. We get around this difficulty by working with certain asymptotic estimates for minimizers recently obtained in \cite{BrMoSq}. The second difficulty is the lack of a direct sum decomposition suitable for applying the classical linking theorem. We use an abstract linking theorem based on the cohomological index proved in \cite{YaPe2} to overcome this difficulty.
\end{abstract}

\maketitle

\begin{center}
	\begin{minipage}{12cm}
		\small
		\tableofcontents
	\end{minipage}
\end{center}

\medskip

\section{Introduction and main result}
For $1 < p < \infty$, $s \in (0,1)$, and $N > sp$, the fractional $p$-Laplacian $(- \Delta)_p^s$ is the nonlinear nonlocal operator defined on smooth functions by
\[
(- \Delta)_p^s\, u(x) = 2\, \lim_{\eps \searrow 0} \int_{B_\eps(x)^c} \frac{|u(x) - u(y)|^{p-2}\, (u(x) - u(y))}{|x - y|^{N+sp}}\, dy, \quad x \in \R^N.
\]
This definition is consistent, up to a normalization constant depending on $N$ and $s$, with the usual definition of the linear fractional Laplacian operator $(- \Delta)^s$ when $p = 2$. There is, currently, a rapidly growing literature on problems involving these nonlocal operators. In particular, fractional $p$-eigenvalue problems have been studied in Brasco et al.\! \cite{BrPaSq}, Brasco and Parini \cite{BrPa},
Franzina and Palatucci \cite{FrPa}, Iannizzotto and Squassina \cite{MR3245079}, and Lindgren and Lindqvist \cite{MR3148135}. 
Regularity of solutions was obtained in 
Brasco and Lindgren \cite{BraLin}, Di Castro et al.\! \cite{DiKuPa,MR3237774}, Iannizzotto et al.\! \cite{IaMoSq}, Kuusi et al.\! \cite{MR3339179}, and Lindgren \cite{Li}. Existence via Morse theory was investigated in Iannizzotto et al.\! \cite{IaLiPeSq}. 
This operator appears in some recent works, see \cite{AMRT,IN} as well as \cite{Ca} for the motivations, that led to its introduction.

Let $\Omega$ be a bounded domain in $\R^N$ with Lipschitz boundary. We consider the problem
\begin{equation} \label{1}
\begin{cases}
(- \Delta)_p^s\, u  = \lambda\, |u|^{p-2}\, u + |u|^{p_s^\ast - 2}\, u & \text{in $\Omega$}  \\
u  = 0  & \text{in $\R^N \setminus \Omega$},
\end{cases}
\end{equation}
where $\lambda > 0$ and $p_s^\ast = Np/(N - sp)$ is the fractional critical Sobolev exponent. Let us recall the weak formulation of problem \eqref{1}. Let
\[
[u]_{s,p} = \left(\int_{\R^{2N}} \frac{|u(x) - u(y)|^p}{|x - y|^{N+sp}}\, dx dy\right)^{1/p}
\]
be the Gagliardo seminorm of a measurable function $u : \R^N \to \R$, and let
\[
W^{s,p}(\R^N) = \set{u \in L^p(\R^N) : [u]_{s,p} < \infty}
\]
be the fractional Sobolev space endowed with the norm
\[
\norm[s,p]{u} = \big(\pnorm{u}^p + [u]_{s,p}^p\big)^{1/p},
\]
where $\pnorm{\cdot}$ is the norm in $L^p(\R^N)$. We work in the closed linear subspace
\[
W^{s,p}_0(\Omega) = \set{u \in W^{s,p}(\R^N) : u = 0 \text{ a.e.\! in } \R^N \setminus \Omega},
\]
equivalently renormed by setting $\norm{\cdot} = [\cdot]_{s,p}$, which is a uniformly convex Banach space. 
The imbedding $W^{s,p}_0(\Omega) \hookrightarrow L^r(\Omega)$ is continuous for $r \in [1,p_s^\ast]$ and compact for $r \in [1,p_s^\ast)$. A function $u \in W^{s,p}_0(\Omega)$ is a weak solution of problem \eqref{1} if
\begin{align*}
\int_{\R^{2N}} \frac{|u(x) - u(y)|^{p-2}\, (u(x) - u(y))\, (v(x) - v(y))}{|x - y|^{N+sp}}\, dx dy &= \lambda \int_\Omega |u|^{p-2}\, uv\, dx \\
&+ \int_\Omega |u|^{p_s^\ast - 2}\, uv\, dx, \quad \forall v \in W^{s,p}_0(\Omega).
\end{align*}
See \cite{IaLiPeSq} and the references therein for further
details for this framework. In the semilinear case $p = 2$ problem \eqref{1} reduces to the critical fractional Laplacian problem
\begin{equation} \label{2}
\begin{cases}
(- \Delta)^s\, u = \lambda u + |u|^{2_s^\ast - 2}\, u & \text{in $\Omega$}  \\
u= 0 & \text{in $\R^N \setminus \Omega$},
\end{cases}
\end{equation}
where $\lambda > 0$ and $2_s^\ast = 2N/(N - 2s)$. This nonlocal problem generalizes the well-known Brezis-Nirenberg problem, which has been extensively studied beginning with the seminal paper \cite{MR709644} (see, e.g., \cite{MR779872,MR831041,MR829403,MR1009077,MR987374,MR1124117,MR1083144,MR1154480,MR1306583,MR1473856,MR1441856,MR1491613,MR1695021,MR1784441,MR1961520} and references therein). Consequently, many results known in the local case $s = 1$ have been extended to problem \eqref{2}. In particular, Servadei \cite{MR3089742,MR3237009} and Servadei and Valdinoci \cite{MR3060890,MR3271254} have shown that problem \eqref{2} has a nontrivial weak solution in the following cases:
\begin{enumroman}
\item $2s < N < 4s$ and $\lambda$ is sufficiently large;
\item $N = 4s$ and $\lambda$ is not an eigenvalue of $(- \Delta)^s$ in $\Omega$;
\item $N > 4s$.
\end{enumroman}
This extends to the fractional setting some well-known results of Brezis and Nirenberg \cite{MR709644}, Capozzi et al.\! \cite{MR831041}, Zhang \cite{MR987374}, and Gazzola and Ruf \cite{MR1441856} for critical Laplacian problems.

In the present paper we consider the quasilinear case $p \ne 2$ of problem \eqref{1}. This presents us with two serious new difficulties. Let
\[
\dot{W}^{s,p}(\R^N) = \set{u \in L^{p_s^\ast}(\R^N) : [u]_{s,p} < \infty}
\]
endowed with the norm $\norm{\cdot}$, and let
\begin{equation} \label{3}
S = \inf_{u \in \dot{W}^{s,p}(\R^N) \setminus \set{0}}\, \frac{\norm{u}^p}{\pnorm[p_s^\ast]{u}^p},
\end{equation}
which is positive by the fractional Sobolev inequality. Our first major difficulty is the lack of an explicit formula for a minimizer for $S$. It has been conjectured that all minimizers are of the form $c\, U(|x - x_0|/\eps)$, where
\[
U(x) = \frac{1}{\left(1 + |x|^{p'}\right)^{(N-sp)/p}}, \quad x \in \R^N,
\]
$p' = p/(p - 1)$ is the H\"{o}lder conjugate of $p$, $c \ne 0$, $x_0 \in \R^N$, and $\eps > 0$. This has been proved in Lieb \cite{MR717827} for $p = 2$, but for $p \ne 2$ it is not even known if these functions are minimizers. We will get around this difficulty by working with certain asymptotic estimates for minimizers recently obtained in Brasco et al.\! \cite{BrMoSq}.

Our second main difficulty is that the linking arguments based on eigenspaces of $(- \Delta)^s$ used in the case $p = 2$ do {\em not} work when $p \ne 2$ since the nonlinear operator $(- \Delta)_p^s$ does not have linear eigenspaces. We will use a more general construction based on sublevel sets as in Perera and Szulkin \cite{MR2153141} (see also Perera et al.\! \cite[Proposition 3.23]{MR2640827}). Moreover, the standard sequence of variational eigenvalues of $(- \Delta)_p^s$ based on the genus does not give enough information about the structure of the sublevel sets to carry out this linking construction. Therefore we will use a different sequence of eigenvalues introduced in Iannizzotto et al.\! \cite{IaLiPeSq} that is based on the $\Z_2$-cohomological index of Fadell and Rabinowitz \cite{MR57:17677}.

Let us recall the definition of the cohomological index. Let $W$ be a Banach space and let $\A$ denote the class of symmetric subsets of $W \setminus \set{0}$. For $A \in \A$, let $\overline{A} = A/\Z_2$ be the quotient space of $A$ with each $u$ and $-u$ identified, let $f : \overline{A} \to \RP^\infty$ be the classifying map of $\overline{A}$, and let $f^\ast : H^\ast(\RP^\infty) \to H^\ast(\overline{A})$ be the induced homomorphism of the Alexander-Spanier cohomology rings. The cohomological index of $A$ is defined by
\[
i(A) = \begin{cases}
\sup \set{m \ge 1 : f^\ast(\omega^{m-1}) \ne 0}, & A \ne \emptyset\\[5pt]
0, & A = \emptyset,
\end{cases}
\]
where $\omega \in H^1(\RP^\infty)$ is the generator of the polynomial ring $H^\ast(\RP^\infty) = \Z_2[\omega]$. For example, the classifying map of the unit sphere $S^{m-1}$ in $\R^m,\, m \ge 1$ is the inclusion $\RP^{m-1} \incl \RP^\infty$, which induces isomorphisms on $H^q$ for $q \le m - 1$, so $i(S^{m-1}) = m$.

The Dirichlet spectrum of $(- \Delta)_p^s$ in $\Omega$ consists of those $\lambda \in \R$ for which the problem
\begin{equation} \label{4}
\begin{cases}
(- \Delta)_p^s\, u  = \lambda\, |u|^{p-2}\, u & \text{in $\Omega$}  \\
u  = 0 & \text{in $\R^N \setminus \Omega$}
\end{cases}
\end{equation}
has a nontrivial weak solution. Although a complete description of the spectrum is not known when $p \ne 2$, we can define an increasing and unbounded sequence of variational eigenvalues via a suitable minimax scheme. The standard scheme based on the genus does not give the index information necessary for our purposes here, so we will use the following scheme based on the cohomological index as in Iannizzotto et al.\! \cite{IaLiPeSq} (see also Perera \cite{MR1998432}). Let
\[
\Psi(u) = \frac{1}{\pnorm{u}^p}, \quad u \in \M = \bgset{u \in W^{s,p}_0(\Omega) : \norm{u} = 1}.
\]
Then eigenvalues of problem \eqref{4} coincide with critical values of $\Psi$. We use the standard notation
\[
\Psi^a = \set{u \in \M : \Psi(u) \le a}, \quad \Psi_a = \set{u \in \M : \Psi(u) \ge a}, \quad a \in \R
\]
for the sublevel sets and superlevel sets, respectively. Let $\F$ denote the class of symmetric subsets of $\M$, and set
\[
\lambda_k := \inf_{M \in \F,\; i(M) \ge k}\, \sup_{u \in M}\, \Psi(u), \quad k \in \N.
\]
Then $0 < \lambda_1 < \lambda_2 \le \lambda_3 \le \cdots \to + \infty$ is a sequence of eigenvalues of problem \eqref{4}, and
\begin{equation} \label{5}
\lambda_k < \lambda_{k+1} \implies i(\Psi^{\lambda_k}) = i(\M \setminus \Psi_{\lambda_{k+1}}) = k
\end{equation}
(see Iannizzotto et al.\! \cite[Proposition 2.4]{IaLiPeSq}). The asymptotic behavior of these eigenvalues was recently studied in Iannizzotto and Squassina \cite{MR3245079}. Making essential use of the index information in \eqref{5}, we will prove the following theorem.

\begin{theorem}[Nonlocal Brezis-Nirenberg problem] \label{Theorem 1}
Let $1 < p < \infty$, $s \in (0,1)$, $N > sp$, and $\lambda > 0$. Then problem \eqref{1} has a nontrivial weak solution in the following cases:
\begin{enumroman}
\item $N = sp^2$ and $\lambda < \lambda_1$;
\item $N > sp^2$ and $\lambda$ is not one of the eigenvalues $\lambda_k$;
\item $N^2/(N + s) > sp^2$;
\item $(N^3 + s^3 p^3)/N\, (N + s) > sp^2$ and $\bdry{\Omega} \in C^{1,1}$.
\end{enumroman}
\end{theorem}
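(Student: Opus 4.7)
The plan is to apply the variational method to the energy functional
\[
I_\lambda(u) = \frac{1}{p}\,\norm{u}^p - \frac{\lambda}{p}\,\pnorm{u}^p - \frac{1}{p_s^\ast}\,\pnorm[p_s^\ast]{u}^{p_s^\ast}, \quad u \in W^{s,p}_0(\Omega),
\]
whose critical points are precisely the weak solutions of \eqref{1}. The first step is a compactness analysis: via a fractional concentration-compactness principle applied to a $(PS)_c$ sequence, one shows that the only obstruction to strong convergence is bubbling at the critical Sobolev constant $S$, and consequently $I_\lambda$ satisfies $(PS)_c$ for every $c < c^\ast := (s/N)\,S^{N/(sp)}$. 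Hence, for each case of the theorem, it suffices to construct a minimax level $c \in (0, c^\ast)$ with a suitable linking structure.

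The second step is the construction of the linking. For $\lambda < \lambda_1$ the functional has a mountain-pass geometry around $0$. For $\lambda \in (\lambda_k, \lambda_{k+1})$ one cannot split $W^{s,p}_0(\Omega)$ into eigenspaces; instead, using property \eqref{5} of the cohomological index, the sublevel set $\Psi^{\lambda_k}$ has index $k$ and the set $\M \setminus \Psi_{\lambda_{k+1}}$ has co-index $k$, so that the cones over them linked with a suitable family of test profiles form an admissible pair for the abstract cohomological linking theorem of Yang-Perera \cite{YaPe2}. This produces a candidate critical level
\[
c_\lambda \;\le\; \sup_{u \in C}\, I_\lambda(u),
\]
where $C$ is the cone over $\Psi^{\lambda_k}$ joined to a concentrating bubble.

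The third and decisive step is to show that this minimax level lies strictly below $c^\ast$. Lacking an explicit minimizer of $S$, I would take the family $u_\eps(x) = \eta(x)\,U_\eps(x)$, where $U_\eps$ is the rescaled ``quasi-extremal'' whose sharp asymptotic behavior is given in \cite{BrMoSq}, and $\eta$ is a cutoff supported near a concentration point $x_0 \in \Omega$. Expanding $\norm{u_\eps}^p$, $\pnorm[p_s^\ast]{u_\eps}^{p_s^\ast}$, and $\pnorm{u_\eps}^p$ in powers of $\eps$ via these asymptotics, the key inequality to verify is
\[
\sup_{t \ge 0}\, I_\lambda(t\,u_\eps) \;<\; c^\ast - \delta_\lambda,\qquad \delta_\lambda > 0,
\]
where the gain in front of the $\eps$-correction from the subcritical term $\lambda\,\pnorm{u_\eps}^p$ must beat the loss created by truncation in the critical term. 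The four distinct dimensional regimes in the statement correspond exactly to how small this gain can be: case (i) at $N = sp^2$ forces a logarithmic correction and the restriction $\lambda < \lambda_1$; case (ii) provides a polynomial gain whenever $\lambda \notin \{\lambda_k\}$, allowing the linking level to drop under $c^\ast$; cases (iii) and (iv) push the range of admissible $\lambda$ by exploiting finer decay of the concentrating profile, with case (iv) further exploiting the $C^{1,1}$ boundary regularity via a Hopf-type boundary estimate for the test functions placed near $\bdry{\Omega}$. Combined with the linking of step two and the compactness of step one, this produces a nontrivial critical point and proves the theorem.

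The main obstacle is precisely this third step in cases (iii) and (iv): without an explicit closed form for the extremal, one must carry out the Brezis-Nirenberg-type estimates using only the two-sided asymptotics of \cite{BrMoSq}, and moreover reconcile the resulting error terms with the dimension conditions $N^2/(N+s) > sp^2$ and $(N^3 + s^3 p^3)/(N(N+s)) > sp^2$, which are tailored so that the subcritical gain strictly dominates all correction terms produced by the truncation and by the mismatch between the concentration profile and a true minimizer.
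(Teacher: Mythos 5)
Your three-step outline --- Palais--Smale below the threshold $(s/N)\,S^{N/sp}$, a cohomological-index linking built from sublevel sets of $\Psi$, and an energy estimate on a concentrating family using the two-sided asymptotics of \cite{BrMoSq} --- does match the paper's architecture, but there are two concrete gaps. The multiplicative cutoff $\eta\,U_\eps$ is not what the paper uses, and it is hard to make sharp here: in the nonlocal $p$-seminorm the cutoff and the tail of the unknown minimizer interact through cross integrals against $|x-y|^{-(N+sp)}$, and with only two-sided decay bounds on $U$ it is not clear how to reach the leading-order estimate $\|\,\cdot\,\|^p \le S^{N/sp} + C(\eps/\delta)^{(N-sp)/(p-1)}$. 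The paper instead truncates in the \emph{range}, setting $u_{\eps,\delta} = G_{\eps,\delta}(U_\eps)$ with $G_{\eps,\delta}$ piecewise linear (so $u_{\eps,\delta} = U_\eps$ on $B_\delta(0)$ and $=0$ outside $B_{\theta\delta}(0)$); then $g_{\eps,\delta}(U_\eps)$ is admissible in the Euler equation $(-\Delta)_p^s\,U_\eps = U_\eps^{p_s^\ast-1}$ and the Brasco--Parini inequality $\|G(U_\eps)\|^p \le \langle (-\Delta)_p^s U_\eps,\,g(U_\eps)\rangle$ yields \eqref{21} directly from the PDE together with \eqref{10}--\eqref{11}. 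Likewise, the ``cone over $\Psi^{\lambda_k}$'' is not an admissible $A_0$ as written: the paper first replaces $\Psi^{\lambda_k}$ by a compact symmetric $E$ of the same index whose elements have $(-\Delta)_p^s v$ uniformly bounded in $L^\infty$ (Proposition \ref{Proposition 2}, by iterating the inverse operator and the $L^q$-estimates of Lemma \ref{Lemma 2}), then cuts each $v$ off around the concentration point so that $E_\delta$ and the bubble have disjoint supports. That separation is what decouples the $L^p$ and $L^{p_s^\ast}$ norms of $(1-t)w+tv_0$ in \eqref{49}, and the $L^\infty$ control is what bounds the cutoff loss $\|v\eta_\delta\|^p \le \|v\|^p + C\delta^{N-sp}$ (Lemma \ref{Lemma 6}). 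Your sketch does not address either point, and without them the minimax level does not come under the compactness threshold.

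You also misread the role of cases (iii) and (iv): they do not ``push the range of admissible $\lambda$.'' Their hypotheses already imply $N > sp^2$, so nonresonant $\lambda$ are covered by (i) and (ii), and the remaining work is precisely the resonant case $\lambda = \lambda_k$, where the leading subcritical gain in the numerator vanishes. There the paper takes $\delta = \eps^\alpha$ with $\alpha$ close to $(N-sp^2)/(N-sp)$, and the dimension conditions $N^2/(N+s) > sp^2$ and $(N^3+s^3p^3)/N(N+s) > sp^2$ are exactly what is needed to make the correction exponents $\beta_q(\alpha)$ positive, i.e.\ all error terms $o(\eps^{sp})$. In case (iv), the $C^{1,1}$ regularity is exploited through the Hopf-type estimate $|v(x)| \le C\,d(x)^s$ of \cite{IaMoSq} applied to the elements $v\in E$ (not to the concentrating bubble), with the concentration point placed near $\partial\Omega$ so that the cutoff loss improves to $C\delta^N$ (Lemma \ref{Lemma 8}); this improved exponent is precisely what permits the weaker dimensional restriction.
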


\noindent
This theorem extends to the fractional setting some well-known results of Garc{\'{\i}}a Azorero and Peral Alonso \cite{MR912211}, Egnell \cite{MR956567}, Guedda and V{\'e}ron \cite{MR1009077}, Arioli and Gazzola \cite{MR1741848}, and Degiovanni and Lancelotti \cite{MR2514055} for critical $p$-Laplacian problems.

Weak solutions of problem \eqref{1} coincide with critical points of the $C^1$-functional
\begin{equation} \label{6}
I_\lambda(u) = \frac{1}{p} \norm{u}^p - \frac{\lambda}{p} \pnorm{u}^p - \frac{1}{p_s^\ast} \pnorm[p_s^\ast]{u}^{p_s^\ast}, \quad u \in W^{s,p}_0(\Omega).
\end{equation}
Proof of Theorem \ref{Theorem 1} will be based on the following abstract critical point theorem 
proved in Yang and Perera (cf.\ \cite[Theorem 2.2]{YaPe2}).

\begin{theorem}
	\label{Theorem 2}
Let $W$ be a Banach space, let $S = \set{u \in W : \norm{u} = 1}$ be the unit sphere in $W$, and let $\pi : W \setminus \set{0} \to S,\, u \mapsto u/\norm{u}$ be the radial projection onto $S$. Let $I$ be a $C^1$-functional on $W$ and let $A_0$ and $B_0$ be disjoint nonempty closed symmetric subsets of $S$ such that
\[
i(A_0) = i(S \setminus B_0) < \infty.
\]
Assume that there exist $R > r > 0$ and $v \in S \setminus A_0$ such that
\[
\sup I(A) \le \inf I(B), \qquad \sup I(X) < \infty,
\]
where
\begin{align*}
A &= \set{tu : u \in A_0,\, 0 \le t \le R} \cup \set{R\, \pi((1 - t)\, u + tv) : u \in A_0,\, 0 \le t \le 1},\\
B &= \set{ru : u \in B_0},   \\
X &= \set{tu : u \in A,\, \norm{u} = R,\, 0 \le t \le 1}.
\end{align*}
Let $\Gamma = \set{\gamma \in C(X,W) : \gamma(X) \text{ is closed and} \restr{\gamma}{A} = \id[A]}$, and set
\[
c := \inf_{\gamma \in \Gamma}\, \sup_{u \in \gamma(X)}\, I(u).
\]
Then
\begin{equation} \label{43}
\inf I(B) \le c \le \sup I(X),
\end{equation}
in particular, $c$ is finite. If, in addition, $I$ satisfies the {\em \PS{c}} condition, then $c$ is a critical value of $I$.
\end{theorem}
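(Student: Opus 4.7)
The plan is to establish \eqref{43} by proving its two bounds separately and then to promote $c$ to a critical value by the usual deformation argument under \PS{c}. The upper bound $c \le \sup I(X)$ is immediate, since $X$ is closed (in fact compact, as a continuous image of a compact set), $A \subset X$, and hence $\id[X] \in \Gamma$.

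The main step is the linking property: for every $\gamma \in \Gamma$ one must show $\gamma(X) \cap B \ne \emptyset$. Granted this, each $\gamma(X)$ contains a point $u_\gamma \in B$ with $I(u_\gamma) \ge \inf_{B} I$, whence $c \ge \inf I(B)$; combined with the upper bound this yields \eqref{43} and the finiteness of $c$.

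I would prove the linking by contradiction: suppose some $\gamma \in \Gamma$ has $\gamma(X) \cap rB_0 = \emptyset$. The set $A$ splits into the symmetric radial cone $A^{(1)} = [0,R] \cdot A_0$ and the non-symmetric cap $A^{(2)} \subset R\, S$ of arcs joining $R\, A_0$ to $Rv$; under the radial projection $\pi$, $A$ maps onto $A_0 \cup \pi(A^{(2)}) \subset S$, which deformation-retracts onto $A_0 \cup \set{v}$ via the obvious arc-parametrization collapsing the cap to $v$. Using $\gamma(X) \cap rB_0 = \emptyset$, $\gamma|_A = \id[A]$, and $v \in S \setminus A_0$, one extracts from the radial projection of $\gamma$ a continuous odd map $h : Z \to S \setminus B_0$, where $Z$ is a symmetric closed subset of $W \setminus \set{0}$ satisfying $i(Z) \ge i(A_0) + 1$ (produced by a $\Z_2$-symmetrization of a suitable portion of $X$). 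Monotonicity of the cohomological index then forces $i(S \setminus B_0) \ge i(A_0) + 1$, contradicting the hypothesis $i(A_0) = i(S \setminus B_0) < \infty$. This topological construction is the principal obstacle: the delicate points are to build the symmetric extension $Z$ and to ensure that $h$ stays inside $S \setminus B_0$ throughout the homotopy, which rest on the disjointness $A_0 \cap B_0 = \emptyset$ together with the assumption that $\gamma(X)$ misses $rB_0$.

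Finally, assume \PS{c} and, toward a contradiction, that $c$ is not a critical value of $I$. Fix $\eps > 0$ small enough that $[c - 2\eps, c + 2\eps]$ contains no critical values and $A \subset I^{c - 2\eps}$; the borderline case $\sup I(A) = c$ is handled by the standard refinement that locates a critical point of $I$ on $B$ at level $c$. The quantitative deformation lemma furnishes a homeomorphism $\eta : W \to W$ with $\eta = \id$ on $I^{c - 2\eps}$ and $\eta(I^{c+\eps}) \subset I^{c-\eps}$. Picking $\gamma \in \Gamma$ with $\sup_{\gamma(X)} I \le c + \eps$, the composition $\eta \circ \gamma$ is continuous, its image $\eta(\gamma(X))$ is closed (since $\gamma(X)$ is closed and $\eta$ is a homeomorphism), and it restricts to $\id[A]$ on $A$; thus $\eta \circ \gamma \in \Gamma$, while $\sup_{(\eta \circ \gamma)(X)} I \le c - \eps$, contradicting the definition of $c$.
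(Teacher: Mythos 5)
The paper does not prove Theorem~\ref{Theorem 2}: it is quoted verbatim from Yang and Perera \cite[Theorem~2.2]{YaPe2}, so there is no in-paper proof to compare your attempt against. Judged on its own, your outline has the right skeleton: $c \le \sup I(X)$ via $\id[X] \in \Gamma$; $c \ge \inf I(B)$ would follow from the linking property $\gamma(X) \cap B \ne \emptyset$ for every $\gamma \in \Gamma$; linking is to be established by a cohomological-index argument; and the passage from \eqref{43} to a critical value under \PS{c} is the standard deformation argument (including the refinement for the borderline case $\sup I(A) = c$).

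The genuine gap is that the entire linking step is a black box, and that step is the theorem. You assert that from a $\gamma \in \Gamma$ with $\gamma(X) \cap rB_0 = \emptyset$ one can ``extract'' a closed symmetric $Z \subset W \setminus \set{0}$ with $i(Z) \ge i(A_0) + 1$ and an odd continuous $h : Z \to S \setminus B_0$, but you give neither the construction of $Z$ and $h$ nor any justification of the crucial lower bound $i(Z) \ge i(A_0) + 1$; this is where essentially all the work lies. A construction that actually carries it through extends $\gamma$ oddly to $\tilde\gamma$ on the symmetric cone $\tilde X = X \cup (-X)$ (well-defined because $\gamma = \id$ on the cone over $A_0$, where $X$ and $-X$ overlap), takes $Z = \set{u \in \tilde X : \norm{\tilde\gamma(u)} = r}$, notes that $\pi \circ \tilde\gamma$ maps $Z$ oddly into $S \setminus B_0$ giving $i(Z) \le i(S \setminus B_0) = i(A_0)$, and then proves $i(Z) \ge i(A_0) + 1$ by combining (a) the fact that the radial projection of the base $\tilde D$ of $\tilde X$ receives an odd map from the suspension $\Sigma A_0$ with poles $\pm v$ (this is where $v \notin A_0$ enters) so $i(\tilde D) \ge i(A_0) + 1$, with (b) a piercing/separation property of the Fadell--Rabinowitz index transferring this bound to the level set $Z$ separating $0$ from $\tilde D$ in $\tilde X$. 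None of these ingredients appear in your sketch, and without them no contradiction is produced. A smaller but real issue: you claim $X$ is compact ``as a continuous image of a compact set''; the theorem only assumes $A_0$ closed, not compact, so $X$ need not even be closed --- and closedness of $X$ is exactly what is needed for $\id[X] \in \Gamma$, the starting point of your upper bound.
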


\noindent
Theorem \ref{Theorem 2} generalizes the linking theorem of Rabinowitz \cite{MR0488128}. The linking construction in its proof was also used in Perera and Szulkin \cite{MR2153141} to obtain nontrivial solutions of $p$-Laplacian problems with nonlinearities that interact with the spectrum. A similar construction based on the notion of cohomological linking was given in Degiovanni and Lancelotti \cite{MR2371112}. See also Perera et al.\! \cite[Proposition 3.23]{MR2640827}.

\vskip3pt
\noindent
The following compactness result, proved in Perera et al.\ \cite[Proposition 3.1]{PeSqYa2}, 
will be crucial for applying Theorem \ref{Theorem 2} to our functional $I_\lambda$.

\begin{proposition}
	\label{Proposition 1}
Let $1 < p < \infty$, $s \in (0,1)$, $N > sp$, and let $S$ be as in \eqref{3}. Then for any $\lambda \in \R$, $I_\lambda$ satisfies the {\em \PS{c}} condition for all $c < \dfrac{s}{N}\, S^{N/sp}$.
\end{proposition}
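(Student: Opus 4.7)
The plan is to follow the classical Brezis--Nirenberg compactness scheme, suitably adapted to the fractional quasilinear setting. Let $(u_n) \incl W^{s,p}_0(\Omega)$ be a Palais--Smale sequence for $I_\lambda$ at a level $c < \frac{s}{N}\, S^{N/sp}$, so that $I_\lambda(u_n) \to c$ and $I_\lambda'(u_n) \to 0$ in the dual of $W^{s,p}_0(\Omega)$.

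First I would prove that $(u_n)$ is bounded. The identity
\[
\frac{s}{N}\, \pnorm[p_s^\ast]{u_n}^{p_s^\ast} = I_\lambda(u_n) - \frac{1}{p}\, \langle I_\lambda'(u_n), u_n\rangle = c + o(1) + o(\norm{u_n})
\]
yields $\pnorm[p_s^\ast]{u_n}^{p_s^\ast} \le C(1 + \norm{u_n})$. Inserting this into $\norm{u_n}^p = \lambda\, \pnorm{u_n}^p + \pnorm[p_s^\ast]{u_n}^{p_s^\ast} + o(\norm{u_n})$ and estimating $\pnorm{u_n}^p \le |\Omega|^{1 - p/p_s^\ast}\, \pnorm[p_s^\ast]{u_n}^p \le C(1 + \norm{u_n})^{p/p_s^\ast}$ by H\"older, the assumption $p > 1$ rules out $\norm{u_n} \to \infty$. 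Hence, up to a subsequence, $u_n \rightharpoonup u$ in $W^{s,p}_0(\Omega)$, $u_n \to u$ in $L^p(\Omega)$, and $u_n \to u$ a.e.\ in $\R^N$.

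The core of the argument is the exclusion of critical concentration. A fractional version of the Lions concentration--compactness principle furnishes an at most countable set $\set{x_j} \incl \closure{\Omega}$ and nonnegative weights $\set{\nu_j}, \set{\mu_j}$ such that, weakly-$\ast$ as measures,
\[
|u_n|^{p_s^\ast}\, dx \rightharpoonup |u|^{p_s^\ast}\, dx + \sum_j \nu_j\, \delta_{x_j}, \qquad d\mu_n \rightharpoonup d\mu \ge \sum_j \mu_j\, \delta_{x_j},
\]
where $d\mu_n$ is the Gagliardo energy density of $u_n$ and $\mu_j \ge S\, \nu_j^{p/p_s^\ast}$ by the fractional Sobolev inequality. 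Testing $\langle I_\lambda'(u_n), \phi_\eps\, u_n\rangle = o(1)$ with a standard cutoff $\phi_\eps$ centered at $x_j$ and sending $\eps \searrow 0$ produces the reverse bound $\mu_j \le \nu_j$; combined with the previous inequality it forces each $\nu_j$ to be either $0$ or at least $S^{N/sp}$. From the first display one has $\pnorm[p_s^\ast]{u_n}^{p_s^\ast} \to Nc/s$, while concentration--compactness gives $\pnorm[p_s^\ast]{u_n}^{p_s^\ast} \to \pnorm[p_s^\ast]{u}^{p_s^\ast} + \sum_j \nu_j$. The strict threshold $c < \frac{s}{N}\, S^{N/sp}$ then excludes any positive atom, and Brezis--Lieb gives $u_n \to u$ strongly in $L^{p_s^\ast}(\Omega)$.

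To conclude I would exploit $\langle I_\lambda'(u_n), u_n - u\rangle \to 0$: the lower-order and critical terms vanish in the limit, so
\[
\langle (-\Delta)_p^s\, u_n - (-\Delta)_p^s\, u,\, u_n - u\rangle \to 0,
\]
and the Simon pointwise monotonicity inequalities for $t \mapsto |t|^{p-2}\, t$ (treated separately for $p \ge 2$ and for $1 < p < 2$) give $\norm{u_n - u} \to 0$. I expect the main obstacle to be the concentration--compactness step at the atoms: the nonlocality of the Gagliardo seminorm rules out the naive localization available in the local case, and the cutoff $\phi_\eps\, u_n$ introduces long-range cross terms in $[\phi_\eps\, u_n]_{s,p}^p$ which must be controlled uniformly in $n$ as $\eps \searrow 0$ via delicate nonlocal commutator estimates. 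This is precisely the content of \cite[Proposition 3.1]{PeSqYa2}.
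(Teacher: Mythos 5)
The paper does not prove Proposition \ref{Proposition 1}; it cites \cite[Proposition 3.1]{PeSqYa2} and moves on, so there is no ``paper's own proof'' to compare your sketch against. Your outline is correct: the boundedness argument via the identity $I_\lambda(u_n) - \tfrac{1}{p}\langle I_\lambda'(u_n),u_n\rangle = \tfrac{s}{N}\pnorm[p_s^\ast]{u_n}^{p_s^\ast}$ and H\"older's inequality works for every $\lambda\in\R$; the exclusion of atoms via a fractional Lions concentration--compactness lemma together with the observation $\pnorm[p_s^\ast]{u_n}^{p_s^\ast}\to Nc/s < S^{N/sp}$ is sound; and the closing step via $\langle(-\Delta)_p^s u_n - (-\Delta)_p^s u, u_n-u\rangle\to 0$ together with the Simon-type pointwise monotonicity inequalities (using boundedness of $\norm{u_n}$ in the singular range $1<p<2$) gives strong convergence. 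You also correctly flag the genuine technical obstacle: controlling the long-range cross terms produced by the cutoff in the nonlocal seminorm.

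It is worth noting, however, that your route is heavier than what this kind of threshold result actually requires: you only need the global energy balance, not localization at the concentration points, so the full Lions machinery (and the delicate nonlocal commutator estimates that go with it) can be bypassed. The more common alternative --- and likely the one in \cite{PeSqYa2} --- runs as follows: after boundedness, pass to the limit in $\langle I_\lambda'(u_n),v\rangle\to 0$ using a.e.\ convergence of the Gagliardo difference quotients and weak $L^{p'}$ convergence to conclude $I_\lambda'(u)=0$, whence $I_\lambda(u)=\tfrac{s}{N}\pnorm[p_s^\ast]{u}^{p_s^\ast}\ge 0$; then prove a Brezis--Lieb lemma for the Gagliardo seminorm, i.e.\ $\norm{u_n}^p - \norm{u_n-u}^p \to \norm{u}^p$, which combined with the scalar Brezis--Lieb in $L^{p_s^\ast}$ and the two limits $\langle I_\lambda'(u_n),u_n\rangle\to 0$, $\langle I_\lambda'(u),u\rangle=0$ gives $\norm{u_n-u}^p - \pnorm[p_s^\ast]{u_n-u}^{p_s^\ast}\to 0$ and $c = I_\lambda(u) + \tfrac{s}{N}\lim\norm{u_n-u}^p$. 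If $\lim\norm{u_n-u}^p = \ell > 0$, the Sobolev inequality forces $\ell\ge S^{N/sp}$, contradicting $c<\tfrac{s}{N}S^{N/sp}$. Both routes are valid; the Brezis--Lieb route trades the concentration-function/cutoff analysis for a single nonlocal Brezis--Lieb lemma and is typically shorter in this setting.
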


\begin{notations}
We use the following notations throughout the paper. For $a \in \R$ and $q > 0$, we write $a^q = |a|^{q-1}\, a$. For $1 \le q \le \infty$, $\pnorm[q]{\cdot}$ denotes the norm in $L^q(\Omega)$ and
\[
q' = \begin{cases}
\infty, & q = 1\\[5pt]
q/(q - 1), & 1 < q < \infty\\[5pt]
1, & q = \infty
\end{cases}
\]
is the H\"{o}lder conjugate of $q$.
\end{notations}

\section*{Acknowledgments} 
Project supported by NSFC-Tian Yuan Special Foundation (No. 11226116), Natural Science Foundation of Jiangsu Province of China for Young Scholars (No. BK2012109), and the China Scholarship Council (No. 201208320435).\ 
The research  was partially supported by Gruppo Nazionale per l'Analisi Matematica,
la Probabilit\`a e le loro Applicazioni (INdAM).

\section{Preliminaries}

\subsection{Minimizers for the Sobolev inequality}

We have the following proposition from Brasco et al.\! \cite{BrMoSq} regarding the minimization problem \eqref{3}.

\begin{proposition}
Let $1 < p < \infty$, $s \in (0,1)$, $N > sp$, and let $S$ be as in \eqref{3}. Then
\begin{enumroman}
\item there exists a minimizer for $S$;
\item for every minimizer $U$, there exist $x_0 \in \R^N$ and a constant sign monotone function $u : \R \to \R$ such that $U(x) = u(|x - x_0|)$;
\item for every minimizer $U$, there exists $\lambda_U > 0$ such that
\[
\int_{\R^{2N}} \frac{(U(x) - U(y))^{p-1}\, (v(x) - v(y))}{|x - y|^{N+sp}}\, dx dy = \lambda_U \int_{\R^N} U^{p_s^\ast - 1}\, v\, dx \quad \forall v \in \dot{W}^{s,p}(\R^N).
\]
\end{enumroman}
\end{proposition}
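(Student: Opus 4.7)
The plan is to prove the three conclusions in turn, using standard tools from the calculus of variations adapted to the fractional setting.

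For (i), I would apply Lions' concentration-compactness principle to a minimizing sequence $u_n \in \dot{W}^{s,p}(\R^N)$ normalized by $\pnorm[p_s^\ast]{u_n} = 1$ and $\norm{u_n}^p \to S$. Since the quotient in \eqref{3} is invariant under the rescalings $u(x) \mapsto \tau^{(N-sp)/p}\, u(\tau(x - x_0))$, after translating and dilating one may assume the L\'{e}vy concentration function $\sup_y \int_{B_1(y)} |u_n|^{p_s^\ast}\, dx$ equals a fixed value in $(0,1)$. Vanishing is then ruled out by construction. Dichotomy is excluded by the strict subadditivity $S < S\, \alpha^{p/p_s^\ast} + S\, (1 - \alpha)^{p/p_s^\ast}$ for $\alpha \in (0,1)$ (which follows from strict concavity of $t \mapsto t^{p/p_s^\ast}$, since $p/p_s^\ast = (N - sp)/N < 1$) combined with a Brezis-Lieb type splitting for the Gagliardo seminorm. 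In the remaining compactness case a subsequence converges strongly in $L^{p_s^\ast}$ to a nonzero limit $U$, and lower semicontinuity of $[\cdot]_{s,p}$ then forces $U$ to be a minimizer.

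For (ii), let $U^{\ast}$ denote the symmetric decreasing rearrangement of $|U|$. Equimeasurability gives $\pnorm[p_s^\ast]{U^\ast} = \pnorm[p_s^\ast]{U}$ while the fractional Polya-Szeg\H{o} inequality of Almgren-Lieb yields $[U^{\ast}]_{s,p} \le [U]_{s,p}$, so $U^{\ast}$ is again a minimizer and equality must hold in both. Analysing the equality case of the rearrangement inequality, which via the layer-cake formula reduces to the equality case of a Riesz rearrangement inequality with strictly symmetric decreasing kernel, forces $|U|$ to coincide, up to translation, with a radially symmetric strictly monotone function; a separate argument, exploiting that $U$ satisfies the Euler-Lagrange equation of (iii) and hence a strong comparison principle, then shows $U$ does not change sign, yielding $U(x) = u(|x - x_0|)$ with $u$ of constant sign and monotone.

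For (iii), consider the $C^1$ functionals $\Phi(u) = \norm{u}^p$ and $G(u) = \pnorm[p_s^\ast]{u}^{p_s^\ast}$ on $\dot{W}^{s,p}(\R^N)$. Since $G'(U) \ne 0$ whenever $U \not\equiv 0$, the Lagrange multiplier theorem provides $\mu \in \R$ with $\Phi'(U) = \mu\, G'(U)$, which, after dividing by $p$, is exactly the displayed weak identity with $\lambda_U = \mu\, p_s^\ast / p$; testing against $v = U$ and using $\norm{U}^p = S\, \pnorm[p_s^\ast]{U}^p$ yields $\lambda_U = S\, \pnorm[p_s^\ast]{U}^{\, p - p_s^\ast} > 0$. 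The main obstacle is the equality case in step (ii): unlike the local Polya-Szeg\H{o} setting handled by Brothers and Ziemer, the nonlocality of $[\cdot]_{s,p}$ demands a careful Riesz-rearrangement analysis, and transferring the radial symmetry of $|U|$ to $U$ itself in the quasilinear case $p \ne 2$ is substantially more delicate than when $p = 2$.
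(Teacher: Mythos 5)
The paper does not prove this proposition; it is quoted verbatim from the companion work of Brasco, Mosconi and Squassina \cite{BrMoSq}, and the present article cites it without argument. So there is no ``paper's own proof'' to compare against here, and the only question is whether your sketch stands on its own.

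As an outline it is reasonable and broadly consistent with the standard route one expects \cite{BrMoSq} to follow. Part~(i) via Lions' concentration-compactness with the strict subadditivity $1 < \alpha^{p/p_s^\ast} + (1-\alpha)^{p/p_s^\ast}$ is the classical argument, and part~(iii) is an elementary Lagrange multiplier computation; your evaluation $\lambda_U = S\,\pnorm[p_s^\ast]{U}^{p - p_s^\ast} > 0$ is correct. Part~(ii) is where the real content lies, and while your strategy (constant sign by the pointwise inequality for the seminorm of $|U|$, radial monotonicity via fractional P\'olya--Szeg\H{o} and the equality case of Riesz rearrangement) is the right one, two points are glossed over. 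First, the step ``a separate argument, exploiting $\dots$ a strong comparison principle, then shows $U$ does not change sign'' is an unnecessary detour: it is cleaner, and avoids invoking regularity/comparison results that would require independent justification, to note that $\bigl||a|-|b|\bigr| \le |a-b|$ with strict inequality when $ab<0$, so if $U^+$ and $U^-$ both have positive measure then $[|U|]_{s,p}<[U]_{s,p}$, contradicting minimality; thus $U$ has constant sign a.e.\ before any rearrangement is applied. Second, and more substantially, the characterization of equality in the fractional P\'olya--Szeg\H{o} inequality for general $p\ne 2$ is itself a nontrivial theorem (it must be reduced, through the layer-cake representation of $|t|^p$ and the Riesz rearrangement inequality with the strictly decreasing kernel $|x-y|^{-(N+sp)}$, to Burchard's rigidity result), and your sketch asserts this reduction rather than carrying it out; you correctly flag this as the main obstacle, but in a complete proof this is exactly the step that requires care, and it is presumably the core of the argument in \cite{BrMoSq}. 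None of this is an error, but a referee would expect either the full reduction or a precise citation for the equality case at that point.
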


\noindent
In the following, we shall fix a radially symmetric nonnegative decreasing minimizer $U = U(r)$
for $S$. Multiplying $U$ by a positive constant if necessary, we may assume that
\begin{equation} \label{7}
(- \Delta)_p^s\, U = U^{p_s^\ast - 1}.
\end{equation}
Testing this equation with $U$ and using \eqref{3} shows that
\begin{equation} \label{8}
\norm{U}^p = \pnorm[p_s^\ast]{U}^{p_s^\ast} = S^{N/sp}.
\end{equation}
For any $\eps > 0$, the function
\begin{equation} \label{9}
U_\eps(x) = \frac{1}{\eps^{(N-sp)/p}}\; U\bigg(\frac{|x|}{\eps}\bigg)
\end{equation}
is also a minimizer for $S$ satisfying \eqref{7} and \eqref{8}, so after a rescaling we may assume that $U(0) = 1$. Henceforth, $U$ will denote such a normalized (with respect to constant multiples and rescaling) minimizer and $U_\eps$ will denote the associated family of minimizers given by \eqref{9}. In the absence of an explicit formula for $U$, we will use the following asymptotic estimates.

\begin{lemma} \label{Lemma 1}
There exist constants $c_1, c_2 > 0$ and $\theta > 1$ such that for all $r \ge 1$,
\begin{equation} \label{10}
\frac{c_1}{r^{(N-sp)/(p-1)}} \le U(r) \le \frac{c_2}{r^{(N-sp)/(p-1)}}
\end{equation}
and
\begin{equation} \label{11}
\frac{U(\theta\, r)}{U(r)} \le \half.
\end{equation}
\end{lemma}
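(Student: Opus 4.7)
The plan is to derive both bounds from the asymptotic analysis of minimizers performed in \cite{BrMoSq}. The exponent $(N-sp)/(p-1)$ in \eqref{10} is the natural one: the conjectured extremal $(1+|x|^{p'})^{-(N-sp)/p}$ decays at infinity like $|x|^{-p'(N-sp)/p} = |x|^{-(N-sp)/(p-1)}$, and one expects every minimizer to exhibit the same rate.

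First I would invoke the decay estimate of \cite{BrMoSq} for nonnegative, radial, decreasing solutions of the Euler-Lagrange equation \eqref{7}: such a solution is bounded above and below, up to multiplicative constants, by $r^{-(N-sp)/(p-1)}$ for $r$ large. Since our $U$ is such a solution (normalized by $U(0)=1$) and is continuous, positive, and bounded on any compact interval $[1,R]$, the asymptotic bounds at infinity extend to all $r \ge 1$ after adjusting the constants $c_1$ and $c_2$. This gives \eqref{10}.

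Granted \eqref{10}, the ratio bound \eqref{11} is a one-line consequence. For any $r \ge 1$ and any $\theta > 1$,
\[
\frac{U(\theta r)}{U(r)} \le \frac{c_2/(\theta r)^{(N-sp)/(p-1)}}{c_1/r^{(N-sp)/(p-1)}} = \frac{c_2}{c_1}\,\theta^{-(N-sp)/(p-1)}.
\]
Since $N > sp$ and $p > 1$, the exponent $(N-sp)/(p-1)$ is strictly positive, so choosing $\theta$ large enough, for instance $\theta \ge (2c_2/c_1)^{(p-1)/(N-sp)}$, forces the right-hand side to be at most $1/2$, yielding \eqref{11}. The only substantive step is the decay estimate itself, which rests on the careful barrier and comparison arguments of \cite{BrMoSq}; given those, the present lemma is essentially just a rescaling observation, and the main obstacle is therefore purely a bookkeeping matter of citing the correct constants and extending the asymptotic control down to $r = 1$.
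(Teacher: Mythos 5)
Your proof is correct and follows the same route as the paper: cite the decay bounds from \cite{BrMoSq} for \eqref{10}, then observe that $U(\theta r)/U(r) \le (c_2/c_1)\,\theta^{-(N-sp)/(p-1)}$ and pick $\theta$ large enough to make this at most $1/2$. The additional remarks about extending the asymptotic bounds down to $r=1$ by compactness and the explicit choice of $\theta$ are sound but not needed beyond what the paper already records.
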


\begin{proof}
The inequalities in \eqref{10} were proved in Brasco et al.\! \cite{BrMoSq}. They imply
\[
\frac{U(\theta\, r)}{U(r)} \le \frac{c_2}{c_1}\, \frac{1}{\theta^{(N-sp)/(p-1)}},
\]
and \eqref{11} follows for sufficiently large $\theta$.
\end{proof}

\subsection{Regularity estimates}

Weak solutions of the equation $(- \Delta)_p^s\, u = f(x)$ enjoy the natural $L^q$-estimates given in the following lemma.

\begin{lemma} \label{Lemma 2}
Let $f \in L^q(\Omega),\, 1 < q \le \infty$ and let $u \in W^{s,p}_0(\Omega)$ be a weak solution of $(- \Delta)_p^s\, u = f(x)$ in $\Omega$. Then
\begin{equation} \label{12}
\pnorm[r]{u} \le C \pnorm[q]{f}^{1/(p-1)},
\end{equation}
where
\[
r = \begin{cases}
\dfrac{N\, (p - 1)\, q}{N - spq}, & 1 < q < \dfrac{N}{sp}\\[10pt]
\infty, & \dfrac{N}{sp} < q \le \infty
\end{cases}
\]
and $C = C(N,\Omega,p,s,q) > 0$. In particular, if $f \in L^\infty(\Omega)$, then
\[
\pnorm[\infty]{u} \le C \pnorm[\infty]{f}^{1/(p-1)}.
\]
\end{lemma}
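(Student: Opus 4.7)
The plan is to run a Moser-type iteration adapted to the nonlocal setting. The common building block is the following \emph{core inequality}, valid for every $\beta \ge 1$: test the equation $(-\Delta)_p^s u = f$ against the (truncated) power $v = |u|^{p(\beta-1)}u$, apply the pointwise algebraic estimate
\begin{equation*}
|a-b|^{p-2}(a-b)\bigl(|a|^{p(\beta-1)}a - |b|^{p(\beta-1)}b\bigr) \ge c_p\,\beta^{1-p}\,\bigl||a|^{\beta-1}a - |b|^{\beta-1}b\bigr|^p
\end{equation*}
inside the Gagliardo double integral (standard for $(-\Delta)_p^s$, cf.\ \cite{BrPa,DiKuPa}), and combine with the fractional Sobolev inequality \eqref{3} applied to $w = |u|^{\beta-1}u$ on the left and H\"older's inequality $\int_\Omega |f|\,|u|^{p\beta - p + 1}\,dx \le \pnorm[q]{f}\,\pnorm[(p\beta - p + 1)q']{u}^{p\beta-p+1}$ on the right. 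The result is
\begin{equation*}
\pnorm[p_s^\ast\beta]{u}^{p\beta} \le C\,\beta^{p-1}\,\pnorm[q]{f}\,\pnorm[(p\beta-p+1)q']{u}^{p\beta - p + 1}.
\end{equation*}

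In the subcritical regime $1 < q < N/(sp)$, I would pick $\beta$ once so that the two Lebesgue norms of $u$ above are evaluated at the same exponent. Solving $p_s^\ast\beta = (p\beta-p+1)q'$ gives $\beta = r/p_s^\ast$ with $r$ precisely the claimed exponent $N(p-1)q/(N-spq)$, and it is the assumption $q < N/(sp)$ that makes this $r$ finite. Since the difference of the exponents on $u$ in the core inequality is $p\beta - (p\beta - p + 1) = p - 1$, the inequality collapses to $\pnorm[r]{u}^{p-1} \le C\,\pnorm[q]{f}$, which is \eqref{12}.

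In the supercritical regime $N/(sp) < q \le \infty$, the one-shot choice would reverse the inequality, so I would iterate instead: set $r_0 = p_s^\ast$ (known from the Sobolev embedding) and inductively $r_{k+1} = p_s^\ast\beta_k$, where $\beta_k$ solves $(p\beta_k - p + 1)q' = r_k$. A direct computation gives $r_{k+1}/r_k \to p_s^\ast/(p q') > 1$, the strict inequality being exactly equivalent to $q > N/(sp)$, so $r_k$ and $\beta_k$ grow geometrically. Applying the core inequality at each step, taking logarithms, and summing the resulting absolutely convergent series delivers a bound $\pnorm[r_k]{u} \le C\,\pnorm[q]{f}^{1/(p-1)}$ uniform in $k$; passing $r_k \to \infty$ then gives $\pnorm[\infty]{u} \le C\,\pnorm[q]{f}^{1/(p-1)}$.

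The main technical hurdle I anticipate lies in the very first step: the test function $v = |u|^{p(\beta-1)}u$ need not \emph{a priori} lie in $W^{s,p}_0(\Omega)$ when $\beta$ is large and $u$ is not bounded. The standard way around this is to truncate $u$ at height $M$, perform the computation with $v_M = |u_M|^{p(\beta-1)}u_M$, and pass $M \to \infty$ via Fatou's lemma in the Gagliardo double integral. This limit is more delicate than in the local case, since the nonlocal operator mixes values of $u$ at arbitrarily distant points, and one must confirm that a truncated form of the pointwise algebraic inequality survives the limit.
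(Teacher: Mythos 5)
Your subcritical argument is essentially the paper's, up to reparametrization: the paper tests with $g(u)=u_k^\alpha$ (your $\alpha=p\beta-p+1$), invokes Brasco--Parini's Lemma A.2 to pass to $\norm{G(u)}^p$ on the left, applies Sobolev and H\"older, and chooses $\alpha$ so that $\tfrac{\alpha+p-1}{p}p_s^\ast=\alpha q'$, which is exactly your one-shot matching of exponents and yields the same $r$. One small bookkeeping slip: you state your algebraic lower bound ``for every $\beta\ge 1$,'' but the required $\beta=r/p_s^\ast$ can be below $1$ when $q$ is close to $1$ (it only satisfies $\beta>(p-1)/p$, equivalently $\alpha>0$); the paper avoids this by using Brasco--Parini's lemma, which only requires $g$ nondecreasing, so your inequality should be stated with the constant depending continuously on $\beta>(p-1)/p$ rather than restricted to $\beta\ge 1$.

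In the supercritical regime $q>N/sp$ your route genuinely diverges from the paper's. You carry out a full Moser iteration, showing $r_{k+1}/r_k\to p_s^\ast/(pq')>1$ so the exponents grow geometrically and the constant $(C\beta_k^{p-1})^{1/p\beta_k}$ contributes a summable log; after normalizing $u$ by $\pnorm[q]{f}^{1/(p-1)}$ the powers of $\pnorm[q]{f}$ telescope exactly, which gives the uniform-in-$k$ bound. The paper instead takes a shortcut: it cites Brasco--Parini's Theorem 3.1 for the bound $\pnorm[\infty]{u}\le C(\pnorm[q']{u}+\pnorm[q]{f}^{1/(p-1)})$, and then only needs the case $\alpha=1$ of the core estimate together with $q'<p_s^\ast$ to control $\pnorm[q']{u}$. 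Your approach is self-contained and makes the iteration mechanism explicit, at the cost of tracking the exponent recursion and the constant growth; the paper's is shorter but relies on an external $L^\infty$ estimate (itself proved by a comparable iteration). Both are correct. You are also right that the truncation/Fatou step is the real technical care point, and the paper handles it exactly as you anticipate, via $u_k=\max\set{-k,\min\set{u,k}}$ and passing $k\to\infty$ at the end.
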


\begin{proof}
For $k > 0$, $t \in \R$, and $\alpha > 0$, set $t_k = \max \set{-k,\min \set{t,k}}$ and consider the nondecreasing function $g(t) = t_k^\alpha$. Using Brasco and Parini \cite[Lemma A.2]{BrPa} and testing the equation $(- \Delta)_p^s\, u = f(x)$ with $g(u) \in W^{s,p}_0(\Omega)$ gives
\[
\norm{G(u)}^p \le \int_{\R^{2N}} \frac{(u(x) - u(y))^{p-1}\, (g(u(x)) - g(u(y)))}{|x - y|^{N+sp}}\, dx dy = \int_\Omega f(x)\, g(u(x))\, dx,
\]
where
\[
G(t) = \int_0^t g'(\tau)^{1/p}\, d\tau = \frac{\alpha^{1/p}\, p}{\alpha + p - 1}\, t_k^{(\alpha + p - 1)/p}.
\]
Using the Sobolev inequality on the left and the H\"{o}lder inequality on the right we get
\begin{equation} \label{13}
\big|u_k^{(\alpha + p - 1)/p}\big|_{p_s^\ast}^p \le C \pnorm[q]{f} \pnorm[q']{u_k^\alpha}.
\end{equation}
If $1 < q < N/sp$, take
\[
\alpha = \frac{(p - 1)\, p_s^\ast}{pq' - p_s^\ast} = \frac{N\, (p - 1)\, (q - 1)}{N - spq} > 0,
\]
so that
\[
\frac{\alpha + p - 1}{p}\, p_s^\ast = \alpha\, q' =: r.
\]
Then $r = N\, (p - 1)\, q/(N - spq)$ and \eqref{13} gives
\[
\pnorm[r]{u_k}^{pr/p_s^\ast} \le C \pnorm[q]{f} \pnorm[r]{u_k}^{r/q'},
\]
so
\[
|u_k|_r \le C \pnorm[q]{f}^{1/(p-1)}.
\]
Letting $k \to + \infty$ gives \eqref{12} for this case.
If $N/sp < q \le \infty$, then
\begin{equation} \label{14}
\pnorm[\infty]{u} \le C \left(\pnorm[q']{u} + \pnorm[q]{f}^{1/(p-1)}\right)
\end{equation}
by Brasco and Parini \cite[Theorem 3.1]{BrPa} and the H\"{o}lder inequality. Noting that $q' < p_s^\ast$ in this case, H\"{o}lder inequality and \eqref{13} with $\alpha = 1$ give us
\[
\pnorm[q']{u_k}^p \le C \pnorm[p_s^\ast]{u_k}^p \le C \pnorm[q]{f} \pnorm[q']{u_k},
\]
so
\[
\pnorm[q']{u_k} \le C \pnorm[q]{f}^{1/(p-1)}.
\]
Letting $k \to + \infty$ and combining with \eqref{14} gives \eqref{12} for this case.
\end{proof}

We also have the following Caccioppoli-type inequality.

\begin{lemma} \label{Lemma 7}
Let $f \in L^q(\Omega),\, 1 < q \le \infty$ and let $u \in W^{s,p}_0(\Omega)$ be a weak solution of $(- \Delta)_p^s\, u = f(x)$ in $\Omega$. If $u\, |\varphi|^p \in W^{s,p}_0(\Omega)$, then
\begin{align} \label{60}
\int_{\R^{2N}} \frac{|u(x) - u(y)|^p\, |\varphi(x)|^p}{|x - y|^{N+sp}}\, dx dy &\le 2 \int_\Omega f(x)\, u(x)\, |\varphi(x)|^p\, dx \\
&+ C \int_{\R^{2N}} \frac{|u(y)|^p\, |\varphi(x) - \varphi(y)|^p}{|x - y|^{N+sp}}\, dx dy, \notag
\end{align}
where $C = C(p) > 0$.
\end{lemma}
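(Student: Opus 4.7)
The plan is to test the weak form of $(-\Delta)_p^s u = f$ with $v := u|\varphi|^p$, which is admissible by hypothesis, and then to isolate the integral appearing on the left of \eqref{60} by means of a symmetric discrete product rule. Namely, I would decompose
\[
u(x)|\varphi(x)|^p - u(y)|\varphi(y)|^p = \tfrac{1}{2}\bigl(u(x)-u(y)\bigr)\bigl(|\varphi(x)|^p+|\varphi(y)|^p\bigr) + \tfrac{1}{2}\bigl(u(x)+u(y)\bigr)\bigl(|\varphi(x)|^p-|\varphi(y)|^p\bigr).
\]
Multiplying by $(u(x)-u(y))^{p-1}$, integrating against the kernel $|x-y|^{-N-sp}$, and using the $(x,y)$-symmetry of the kernel on the first summand (which converts the factor $\tfrac{1}{2}(|\varphi(x)|^p+|\varphi(y)|^p)$ into $|\varphi(x)|^p$), the tested identity rewrites as
\[
\int_{\R^{2N}} \frac{|u(x)-u(y)|^p\,|\varphi(x)|^p}{|x-y|^{N+sp}}\,dxdy \;=\; \int_\Omega f\,u\,|\varphi|^p\,dx \;-\; E,
\]
where $E$ denotes the integral of the second summand (multiplied by $(u(x)-u(y))^{p-1}$) against the same kernel.

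To bound $|E|$, I would combine the elementary inequality
\[
\bigl||\varphi(x)|^p - |\varphi(y)|^p\bigr| \le p\bigl(|\varphi(x)|^{p-1}+|\varphi(y)|^{p-1}\bigr)|\varphi(x)-\varphi(y)|
\]
with $|u(x)+u(y)|\le |u(x)|+|u(y)|$ to expand the integrand of $E$ as a sum of terms of the form $|u(x)-u(y)|^{p-1}|u(\cdot)||\varphi(\cdot)|^{p-1}|\varphi(x)-\varphi(y)|$. After relabelling $(x,y)\leftrightarrow(y,x)$ in the symmetric integrals so that all remaining $u$-mass sits on $u(y)$ and all remaining $\varphi^{p-1}$-mass sits on $|\varphi(x)|^{p-1}$, a weighted Young inequality with exponents $p/(p-1)$ and $p$ bounds each resulting term by $\delta\,|u(x)-u(y)|^p|\varphi(x)|^p + C_\delta\,|u(y)|^p|\varphi(x)-\varphi(y)|^p$. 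Choosing $\delta>0$ so that the first contribution is at most $\tfrac{1}{2}\int_{\R^{2N}}|u(x)-u(y)|^p|\varphi(x)|^p/|x-y|^{N+sp}\,dxdy$ after integration, I absorb it into the left-hand side of the identity above; multiplying through by $2$ then yields \eqref{60} with $C = C(p)$.

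The main obstacle is the bookkeeping in the estimate of $|E|$: the inequality \eqref{60} admits only $|u(y)|^p|\varphi(x)-\varphi(y)|^p$ on the right, not an $|u(x)|^p$ analogue, so every intermediate bound must be symmetrized carefully \emph{before} Young's inequality is applied, otherwise the wrong weight on $\varphi$ survives. This step is most delicate when $1<p<2$, where $|a+b|^{p-1}$ admits only the sublinear bound $|a+b|^{p-1}\le |a|^{p-1}+|b|^{p-1}$; for $p\ge 2$ one has instead $|a+b|^{p-1}\le 2^{p-2}(|a|^{p-1}+|b|^{p-1})$ and the estimates are more straightforward. Once both ranges are handled, the factor $2$ in front of $\int f u|\varphi|^p\,dx$ is exactly the price paid for the absorption of $|E|$.
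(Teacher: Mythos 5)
Your argument is correct, and it follows the same overall strategy as the paper's proof (test with $u|\varphi|^p$, split the resulting integrand, control the error term with the inequality $||a|^p-|b|^p|\le p|a-b|(|a|^{p-1}+|b|^{p-1})$ and Young, then absorb). The one genuine difference is the choice of algebraic split. The paper uses the asymmetric identity
\[
u(x)|\varphi(x)|^p - u(y)|\varphi(y)|^p = (u(x)-u(y))|\varphi(x)|^p + u(y)\bigl(|\varphi(x)|^p-|\varphi(y)|^p\bigr),
\]
which makes the error term carry only the single factor $u(y)$, so after applying the elementary inequality and Young one lands directly on $|u(y)|^p|\varphi(x)-\varphi(y)|^p$, and the only residual symmetrization needed is on the $|u(x)-u(y)|^p(|\varphi(x)|^p+|\varphi(y)|^p)$ piece. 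Your symmetric split makes the main term appear immediately via the kernel symmetry (a small gain in transparency) but at the cost of replacing the factor $u(y)$ by $\tfrac12(u(x)+u(y))$ in the error, so you end up with four cross terms instead of two and need to invoke the $(x,y)\!\leftrightarrow\!(y,x)$ symmetry for more of them. Both routes give the same constant structure.

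Two small inaccuracies worth fixing. First, you cannot relabel \emph{before} Young so that ``all $u$-mass sits on $u(y)$ and all $\varphi^{p-1}$-mass sits on $|\varphi(x)|^{p-1}$'': the diagonal products $|u(x)||\varphi(x)|^{p-1}$ and $|u(y)||\varphi(y)|^{p-1}$ cannot be brought to the mixed form $|u(y)||\varphi(x)|^{p-1}$ by a swap. The correct order is to apply Young pointwise to each of the four products, obtaining e.g.\ $\delta|u(x)-u(y)|^p|\varphi(x)|^p + C_\delta|u(x)|^p|\varphi(x)-\varphi(y)|^p$ for the $|u(x)||\varphi(x)|^{p-1}$ term, and only \emph{then} swap $x\leftrightarrow y$ in the second integral to turn $|u(x)|^p|\varphi(x)-\varphi(y)|^p$ into $|u(y)|^p|\varphi(x)-\varphi(y)|^p$; since the absorbable integral $\int|u(x)-u(y)|^p|\varphi(\cdot)|^p\,d\mu$ is the same whether the weight sits at $x$ or $y$, the end result is unchanged. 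Second, the closing worry about $1<p<2$ and $|a+b|^{p-1}$ is spurious: in your decomposition the exponent $p-1$ lies on the difference $u(x)-u(y)$, not on the sum, and all you need for $u(x)+u(y)$ is the triangle inequality, which is indifferent to $p$. No case split is required.
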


\begin{proof}
Testing the equation $(- \Delta)_p^s\, u = f(x)$ with $u\, |\varphi|^p$ gives
\begin{align}
&  \int_\Omega f(x)\, u(x)\, |\varphi(x)|^p\, dx \notag\\[5pt]
& =  \int_{\R^{2N}} \frac{(u(x) - u(y))^{p-1}\, (u(x)\, |\varphi(x)|^p - u(y)\, |\varphi(y)|^p)}{|x - y|^{N+sp}}\, dx dy \notag\\[5pt]
& =  \int_{\R^{2N}} \frac{|u(x) - u(y)|^p\, |\varphi(x)|^p}{|x - y|^{N+sp}}\, dx dy \notag\\[5pt]
&  + \int_{\R^{2N}} \frac{(u(x) - u(y))^{p-1}\, u(y)\, (|\varphi(x)|^p - |\varphi(y)|^p)}{|x - y|^{N+sp}}\, dx dy. \label{16}
\end{align}
By the elementary inequality $||a|^p - |b|^p| \le p\, |a - b|\, (|a|^{p-1} + |b|^{p-1})$ valid for all $a, b \in \R$ and the Young's inequality,
\begin{align*}
&  - \int_{\R^{2N}} \frac{(u(x) - u(y))^{p-1}\, u(y)\, (|\varphi(x)|^p - |\varphi(y)|^p)}{|x - y|^{N+sp}}\, dx dy\\[5pt]
& \le  p \int_{\R^{2N}} \frac{|u(x) - u(y)|^{p-1}\, |u(y)|\, |\varphi(x) - \varphi(y)|\, (|\varphi(x)|^{p-1} + |\varphi(y)|^{p-1})}{|x - y|^{N+sp}}\, dx dy\\[5pt]
& \le  \frac{1}{4} \int_{\R^{2N}} \frac{|u(x) - u(y)|^p\, (|\varphi(x)|^p + |\varphi(y)|^p)}{|x - y|^{N+sp}}\, dx dy\\[5pt]
&  + C \int_{\R^{2N}} \frac{|u(y)|^p\, |\varphi(x) - \varphi(y)|^p}{|x - y|^{N+sp}}\, dx dy\\[5pt]
& =  \half \int_{\R^{2N}} \frac{|u(x) - u(y)|^p\, |\varphi(x)|^p}{|x - y|^{N+sp}}\, dx dy + C \int_{\R^{2N}} \frac{|u(y)|^p\, |\varphi(x) - \varphi(y)|^p}{|x - y|^{N+sp}}\, dx dy.
\end{align*}
Combining this with \eqref{16} gives \eqref{60}.
\end{proof}

\noindent
As a consequence of Lemmas \ref{Lemma 2} and \ref{Lemma 7}, we have the following lemma.

\begin{lemma} \label{Lemma 4}
Let $f \in L^q(\Omega),\, N/sp < q \le \infty$ and let $u \in W^{s,p}_0(\Omega)$ be a weak solution of $(- \Delta)_p^s\, u = f(x)$ in $\Omega$. Then
\begin{equation} \label{15}
\norm{u \varphi}^p \le C \pnorm[q]{f}^{p/(p-1)} \left(\pnorm[pq']{\varphi}^p + \norm{\varphi}^p\right) \quad \forall \varphi \in L^{pq'}(\Omega) \cap W^{s,p}_0(\Omega),
\end{equation}
where $C = C(N,\Omega,p,s,q) > 0$.
\end{lemma}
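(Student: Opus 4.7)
The plan is to reduce the estimate on $\|u\varphi\|^p = [u\varphi]_{s,p}^p$ to a combination of the Caccioppoli inequality of Lemma~\ref{Lemma 7} and the $L^\infty$-bound furnished by Lemma~\ref{Lemma 2}. First I would split the finite difference of the product $u\varphi$ using the pointwise identity
\[
u(x)\varphi(x) - u(y)\varphi(y) = (u(x) - u(y))\,\varphi(x) + u(y)\,(\varphi(x) - \varphi(y)),
\]
and the elementary inequality $|a+b|^p \le 2^{p-1}(|a|^p+|b|^p)$, to obtain
\[
\|u\varphi\|^p \le 2^{p-1} \int_{\R^{2N}} \frac{|u(x)-u(y)|^p\,|\varphi(x)|^p}{|x-y|^{N+sp}}\,dxdy + 2^{p-1} \int_{\R^{2N}} \frac{|u(y)|^p\,|\varphi(x)-\varphi(y)|^p}{|x-y|^{N+sp}}\,dxdy.
\]

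Next I would apply Lemma~\ref{Lemma 7} to the first integral (after checking that $u|\varphi|^p \in W^{s,p}_0(\Omega)$, which follows from the boundedness of $u$ given by Lemma~\ref{Lemma 2} together with the assumption $\varphi \in L^{pq'}(\Omega)\cap W^{s,p}_0(\Omega)$). This replaces the first integral by
\[
2\int_\Omega f(x)\,u(x)\,|\varphi(x)|^p\,dx + C\int_{\R^{2N}} \frac{|u(y)|^p\,|\varphi(x)-\varphi(y)|^p}{|x-y|^{N+sp}}\,dxdy,
\]
so that both remaining contributions involve either a weighted $L^1$-norm against $f$ or the same $|u(y)|^p$-weighted Gagliardo integral of $\varphi$ as above.

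For the weighted $L^1$-term, I would use H\"older's inequality with exponents $q,\infty,q'$ (observing that $|\varphi|^p \in L^{q'}(\Omega)$ because $\varphi \in L^{pq'}(\Omega)$) to estimate
\[
\int_\Omega f\,u\,|\varphi|^p\,dx \le \pnorm[q]{f}\,\pnorm[\infty]{u}\,\pnorm[pq']{\varphi}^p.
\]
For the Gagliardo-type term, since $q > N/sp$ the bound $|u(y)|^p \le \pnorm[\infty]{u}^p$ controls the integral by $\pnorm[\infty]{u}^p\,\|\varphi\|^p$. Now the key input from Lemma~\ref{Lemma 2} in the case $q > N/sp$ gives $\pnorm[\infty]{u} \le C\,\pnorm[q]{f}^{1/(p-1)}$, so both occurrences of $\pnorm[\infty]{u}^p$ upgrade to $C\,\pnorm[q]{f}^{p/(p-1)}$, while the mixed term $\pnorm[q]{f}\,\pnorm[\infty]{u}$ becomes $C\,\pnorm[q]{f}^{1+1/(p-1)} = C\,\pnorm[q]{f}^{p/(p-1)}$ as well. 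Collecting the pieces then yields exactly \eqref{15}.

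I expect the only genuinely delicate point to be the verification that $u|\varphi|^p$ is an admissible test function in $W^{s,p}_0(\Omega)$, which is needed to legitimately apply Lemma~\ref{Lemma 7}; this can be handled by truncation of $\varphi$ at levels $\pm k$, for which the needed regularity is automatic, followed by passing to the limit $k \to \infty$ using Fatou on the left-hand side and dominated convergence (justified by $\pnorm[\infty]{u} < \infty$ and $\varphi \in L^{pq'}\cap W^{s,p}_0$) on the right-hand side. The remaining manipulations are routine applications of H\"older's inequality and the $L^\infty$ bound already established.
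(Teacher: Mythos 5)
Your proposal is correct and follows essentially the same route as the paper: both reduce to bounding the weighted Gagliardo integral $\int |u(x)-u(y)|^p|\varphi(x)|^p\,|x-y|^{-N-sp}\,dx\,dy$ via the Caccioppoli inequality of Lemma~\ref{Lemma 7}, then invoke the $L^\infty$-bound from Lemma~\ref{Lemma 2} and H\"older, and both handle the admissibility of the test function by truncating $\varphi$. The only (cosmetic) difference is ordering: the paper truncates and applies Lemma~\ref{Lemma 7} first to establish the bound \eqref{19} and performs the product-rule decomposition of $\norm{u\varphi}^p$ at the very end, whereas you decompose first and apply the Caccioppoli estimate to the resulting first term.
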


\begin{proof}
Setting $t_k = \max \set{-k,\min \set{t,k}}$ for $k > 0$ and $t \in \R$, noting that $u\, |\varphi_k|^p \in W^{s,p}_0(\Omega)$, and applying Lemma \ref{Lemma 7} gives
\begin{align} \label{17}
\int_{\R^{2N}} \frac{|u(x) - u(y)|^p\, |\varphi_k(x)|^p}{|x - y|^{N+sp}}\, dx dy &\le 2 \int_\Omega f(x)\, u(x)\, |\varphi_k(x)|^p\, dx \\
&+ C \int_{\R^{2N}} \frac{|u(y)|^p\, |\varphi_k(x) - \varphi_k(y)|^p}{|x - y|^{N+sp}}\, dx dy.  \notag
\end{align}
Since $N/sp < q \le \infty$,
\begin{equation} \label{18}
\pnorm[\infty]{u} \le C \pnorm[q]{f}^{1/(p-1)}
\end{equation}
by Lemma \ref{Lemma 2}. By \eqref{17}, \eqref{18}, and the H\"{o}lder inequality,
\begin{align*}
\int_{\R^{2N}} \frac{|u(x) - u(y)|^p\, |\varphi_k(x)|^p}{|x - y|^{N+sp}}\, dx dy &\le C \pnorm[q]{f}^{p/(p-1)} \left(\pnorm[pq']{\varphi_k}^p + \norm{\varphi_k}^p\right) \\
&\le C \pnorm[q]{f}^{p/(p-1)} \left(\pnorm[pq']{\varphi}^p + \norm{\varphi}^p\right),
\end{align*}
and letting $k \to + \infty$ gives
\begin{equation} \label{19}
\int_{\R^{2N}} \frac{|u(x) - u(y)|^p\, |\varphi(x)|^p}{|x - y|^{N+sp}}\, dx dy \le C \pnorm[q]{f}^{p/(p-1)} \left(\pnorm[pq']{\varphi}^p + \norm{\varphi}^p\right).
\end{equation}
Since
\begin{align*}
\int_{\R^{2N}} \frac{|u(x)\, \varphi(x) - u(y)\, \varphi(y)|^p}{|x - y|^{N+sp}}\, dx dy &\le C \left(\int_{\R^{2N}} \frac{|u(x) - u(y)|^p\, |\varphi(x)|^p}{|x - y|^{N+sp}}\, dx dy\right. \\
&+ \left.\int_{\R^{2N}} \frac{|u(y)|^p\, |\varphi(x) - \varphi(y)|^p}{|x - y|^{N+sp}}\, dx dy\right),
\end{align*}
\eqref{15} readily follows from \eqref{19} and \eqref{18}.
\end{proof}

\noindent
Now let $\theta$ be as in Lemma \ref{Lemma 1}, let $\eta \in C^\infty(\R^N,[0,1])$ be such that
\[
\eta(x) = \begin{cases}
0, & |x| \le 2 \theta\\[5pt]
1, & |x| \ge 3 \theta,
\end{cases}
\]
and let $\eta_\delta(x) = \eta\Big(\dfrac{x}{\delta}\Big)$ for $\delta > 0$.

\begin{lemma} \label{Lemma 6}
Assume that $0 \in \Omega$. Then there exists a constant $C = C(N,\Omega,p,s) > 0$ such that for any $v \in W^{s,p}_0(\Omega)$ such that $(- \Delta)_p^s\, v \in L^\infty(\Omega)$ and $\delta > 0$ such that $B_{5 \theta \delta}(0) \subset \Omega$,
\[
\norm{v \eta_\delta}^p \le \norm{v}^p + C \pnorm[\infty]{(- \Delta)_p^s\, v}^{p/(p-1)} \delta^{N-sp}.
\]
\end{lemma}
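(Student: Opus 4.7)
The plan is to write $v\eta_\delta = v - v\psi$ with $\psi := 1 - \eta_\delta$, which is smooth, compactly supported in $B_{3\theta\delta}\subset\Omega$, and hence lies in $W^{s,p}_0(\Omega)$. Setting $f := (- \Delta)_p^s\, v$, I will compare $\norm{v - v\psi}^p$ with $\norm{v}^p$ through a first-order Taylor-type expansion whose linear part is identified, via the weak formulation, with $-p\int_\Omega f\, v\psi\, dx$, and whose remainder is controlled by the smallness of $\psi$.

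Two preliminary bounds will be used repeatedly. First, Lemma~\ref{Lemma 2} gives $\pnorm[\infty]{v} \le C\pnorm[\infty]{f}^{1/(p-1)}$. Second, Lemma~\ref{Lemma 4} applied with $\varphi = \psi$ and $q = \infty$, combined with the scaling identities $\pnorm[p]{\psi}^p \le |B_{3\theta\delta}| = C\delta^N$ and $\norm{\psi}^p = [\psi]_{s,p}^p = C\delta^{N-sp}$ (obtained by rescaling a fixed model cutoff), yields
\[
\norm{v\psi}^p \le C\pnorm[\infty]{f}^{p/(p-1)}\delta^{N-sp},
\]
where $\delta^N$ is absorbed into $\delta^{N-sp}$ using $\delta \le \operatorname{diam}(\Omega)/(10\theta)$.

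Setting $A := v(x) - v(y)$ and $B := v\psi(x) - v\psi(y)$, so that $(v\eta_\delta)(x) - (v\eta_\delta)(y) = A - B$, I would invoke the pointwise convexity inequality
\[
|A - B|^p \le |A|^p - p\,|A|^{p-2}A\cdot B + C_p\bigl(|A|^{p-2}|B|^2 + |B|^p\bigr),\qquad p \ge 2,
\]
(with the $|A|^{p-2}|B|^2$ remainder simply absent when $1 < p < 2$). Integrating against $|x-y|^{-(N+sp)}$ over $\R^{2N}$, the linear term equals $-p\int_\Omega f\, v\psi\, dx$ by the weak formulation of $(- \Delta)_p^s v = f$ tested with $v\psi \in W^{s,p}_0(\Omega)$, and its absolute value is at most $\pnorm[\infty]{f}\pnorm[\infty]{v}|B_{3\theta\delta}| \le C\pnorm[\infty]{f}^{p/(p-1)}\delta^N$, which is dominated by $\delta^{N-sp}$.

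The main obstacle is the quadratic remainder $\int_{\R^{2N}}|A|^{p-2}|B|^2 / |x-y|^{N+sp}\, dxdy$ for $p > 2$, since a naive H\"older application produces only the suboptimal rate $\delta^{2(N-sp)/p}$. I would instead use the decomposition $B = \psi(x)A + v(y)(\psi(x) - \psi(y))$, so that $|B|^2 \le 2\psi(x)^2|A|^2 + 2|v(y)|^2|\psi(x) - \psi(y)|^2$, and treat the two resulting pieces separately. The piece $\int|A|^p\psi(x)^2/|x-y|^{N+sp}$ is controlled by Lemma~\ref{Lemma 7} (Caccioppoli) with weight $\varphi = \psi^{2/p}$, whose Gagliardo seminorm is bounded via the H\"older-type inequality $|\psi^{2/p}(x) - \psi^{2/p}(y)|^p \le |\psi(x) - \psi(y)|^2$ (valid since $2/p \le 1$). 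The piece $\int|A|^{p-2}|v(y)|^2|\psi(x)-\psi(y)|^2/|x-y|^{N+sp}$ is bounded crudely by $\pnorm[\infty]{v}^p\int_{\R^{2N}}|\psi(x)-\psi(y)|^2/|x-y|^{N+sp}\, dxdy$, with the latter equal to $C\delta^{N-sp}$ by scaling (the integral converges because $sp < N + 2$). Each correction is therefore at most $C\pnorm[\infty]{f}^{p/(p-1)}\delta^{N-sp}$, which is precisely the claimed bound.
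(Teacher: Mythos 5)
Your strategy---writing $v\eta_\delta = v - v\psi$ with $\psi = 1-\eta_\delta$ and expanding $\norm{v-v\psi}^p$ around $\norm{v}^p$ via a pointwise convexity inequality---is a genuinely different route from the paper's, which instead partitions $\R^{2N}$ into three regions (both variables far from $B_{3\theta\delta}(0)$, both in $B_{4\theta\delta}(0)$, and the mixed region) and estimates each integral separately. Your approach is clean for $1 < p \le 2$, and your treatment of the linear term and the $|B|^p$ remainder via Lemma~\ref{Lemma 4} is correct. However, there is a genuine gap in the superquadratic case $p > 2$: both sub-pieces of the quadratic remainder reduce, after bounding by $\pnorm[\infty]{v}$, to the integral
\[
\int_{\R^{2N}} \frac{|\psi(x)-\psi(y)|^2}{|x-y|^{N+sp}}\,dx\,dy,
\]
which is the square of the $W^{sp/2,\,2}$ Gagliardo seminorm of $\psi$. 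For any non-constant smooth $\psi$, the integrand behaves like $|x-y|^{2-N-sp}$ near the diagonal, so this quantity is finite if and only if $sp < 2$; your stated criterion $sp < N+2$ is not the convergence threshold. Since the paper's hypotheses allow $sp \ge 2$ (e.g.\ $N=4$, $p=3$, $s=0.9$), the bound $C\delta^{N-sp}$ for this term simply fails in general. Bounding $|\psi^{2/p}(x)-\psi^{2/p}(y)|^p$ by $|\psi(x)-\psi(y)|^2$ in the first sub-piece is also wasteful and runs into the same divergence, although that sub-piece could be salvaged by estimating $[\psi^{2/p}]_{s,p}^p$ directly with a carefully chosen cutoff; the second sub-piece, involving $|v(y)|^2|A|^{p-2}|\psi(x)-\psi(y)|^2$, has no such rescue. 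The paper's domain partition sidesteps this issue entirely: on the mixed region $A_3 = B_{3\theta\delta}(0)\times B_{4\theta\delta}(0)^c$ one has $|x-y|\ge |y|/4$, and the crude bound $\pnorm[\infty]{v}^p\int_{A_3}|y|^{-N-sp}\,dx\,dy \le C\pnorm[\infty]{v}^p\delta^{N-sp}$ is valid for every $sp < N$ with no extra constraint.
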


\begin{proof}
We have
\begin{align} \label{59}
\norm{v \eta_\delta}^p &\le \int_{A_1} \frac{|v(x) - v(y)|^p}{|x - y|^{N+sp}}\, dx dy + \int_{A_2} \frac{|v(x)\, \eta_\delta(x) - v(y)\, \eta_\delta(y)|^p}{|x - y|^{N+sp}}\, dx dy  \\
& + 2 \int_{A_3} \frac{|v(x)\, \eta_\delta(x) - v(y)|^p}{|x - y|^{N+sp}}\, dx dy =: I_1 + I_2 + 2 I_3, \notag
\end{align}
where 
$$
A_1 = B_{3 \theta \delta}(0)^c \times B_{3 \theta \delta}(0)^c, \quad
A_2 = B_{4 \theta \delta}(0) \times B_{4 \theta \delta}(0),\quad 
A_3 = B_{3 \theta \delta}(0) \times B_{4 \theta \delta}(0)^c. 
$$
Clearly, $I_1 \le \norm{v}^p$. To estimate $I_2$, let $\varphi \in C^\infty_0(B_{5 \theta}(0),[0,1])$ with $\varphi = \eta$ in $B_{4 \theta}(0)$ and let $\varphi_\delta(x) = \varphi(x/\delta)$. Then
\[
I_2 = \int_{A_2} \frac{|v(x)\, \varphi_\delta(x) - v(y)\, \varphi_\delta(y)|^p}{|x - y|^{N+sp}}\, dx dy \le \norm{v \varphi_\delta}^p \le C \pnorm[\infty]{(- \Delta)_p^s\, v}^{p/(p-1)} \norm{\varphi_\delta}^p
\]
by Lemma \ref{Lemma 4} applied to $\varphi_\delta$ with $q = \infty$, and $\norm{\varphi_\delta}^p = \delta^{N-sp} \norm{\varphi}^p$. Since $|x - y| \ge |y| - 3 \theta \delta \ge |y|/4$ on $A_3$,
\[
I_3 \le C \pnorm[\infty]{v}^p \int_{A_3} \frac{dx dy}{|y|^{N+sp}} \le C \pnorm[\infty]{(- \Delta)_p^s\, v}^{p/(p-1)} \delta^{N-sp}
\]
by Lemma \ref{Lemma 2}.
\end{proof}

\subsection{Auxiliary estimates}

We now construct some auxiliary functions and estimate their norms. In what follows $\theta$ is the universal constant in Lemma \ref{Lemma 1} that depends only on $N$, $p$, and $s$. We may assume without loss of generality that $0 \in \Omega$.
For $\eps, \delta > 0$, let
\[
m_{\eps,\delta} = \frac{U_\eps(\delta)}{U_\eps(\delta) - U_\eps(\theta \delta)},
\]
let
\[
g_{\eps,\delta}(t) = \begin{cases}
0, & 0 \le t \le U_\eps(\theta \delta)\\[5pt]
m_{\eps,\delta}^p\, (t - U_\eps(\theta \delta)), & U_\eps(\theta \delta) \le t \le U_\eps(\delta)\\[5pt]
t + U_\eps(\delta)\, (m_{\eps,\delta}^{p-1} - 1), & t \ge U_\eps(\delta),
\end{cases}
\]
and let
\begin{equation}
\label{defG}
G_{\eps,\delta}(t) = \int_0^t g_{\eps,\delta}'(\tau)^{1/p}\, d\tau = \begin{cases}
0, & 0 \le t \le U_\eps(\theta \delta)\\[5pt]
m_{\eps,\delta}\, (t - U_\eps(\theta \delta)), & U_\eps(\theta \delta) \le t \le U_\eps(\delta)\\[5pt]
t, & t \ge U_\eps(\delta).
\end{cases}
\end{equation}
The functions $g_{\eps,\delta}$ and $G_{\eps,\delta}$ are nondecreasing and absolutely continuous. Consider the radially symmetric nonincreasing function
\[
u_{\eps,\delta}(r) = G_{\eps,\delta}(U_\eps(r)),
\]
which satisfies
\begin{equation} \label{20}
u_{\eps,\delta}(r) = \begin{cases}
U_\eps(r), & r \le \delta\\[5pt]
0, & r \ge \theta \delta.
\end{cases}
\end{equation}
We have the following estimates for $u_{\eps,\delta}$.

\begin{lemma} \label{Lemma 3}
There exists a constant $C = C(N,p,s) > 0$ such that for any $\eps \le \delta/2$,
\begin{gather}
\label{21} \norm{u_{\eps,\delta}}^p \le S^{N/sp} + C \left(\frac{\eps}{\delta}\right)^{(N-sp)/(p-1)},\\[10pt]
\label{22} \pnorm{u_{\eps,\delta}}^p \ge \begin{cases}
\dfrac{1}{C}\; \eps^{sp}\, \log \bigg(\dfrac{\delta}{\eps}\bigg), & N = sp^2\\[10pt]
\dfrac{1}{C}\; \eps^{sp}, & N > sp^2,
\end{cases}\\[12.5pt]
\label{23} \pnorm[p_s^\ast]{u_{\eps,\delta}}^{p_s^\ast} \ge S^{N/sp} - C \left(\frac{\eps}{\delta}\right)^{N/(p-1)}.
\end{gather}
\end{lemma}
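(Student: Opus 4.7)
The plan is to prove the three estimates separately, each by explicit computation using the scaling identity for $U_\eps$, the equation \eqref{7}, and the asymptotics in Lemma \ref{Lemma 1}. A preliminary observation is that $g_{\eps,\delta}$ is designed so that \eqref{defG} coincides with the choice $G_{\eps,\delta}(t) = \int_0^t g_{\eps,\delta}'(\tau)^{1/p}\,d\tau$, and therefore (by the H\"older inequality argument of Brasco-Parini \cite[Lemma A.2]{BrPa}, already used in the proof of Lemma \ref{Lemma 2})
\[
|G_{\eps,\delta}(a) - G_{\eps,\delta}(b)|^p \le (a-b)^{p-1}\bigl(g_{\eps,\delta}(a) - g_{\eps,\delta}(b)\bigr), \qquad a,b \ge 0.
\]
Moreover, since $g_{\eps,\delta}$ vanishes on $[0, U_\eps(\theta\delta)]$, the composition $g_{\eps,\delta}(U_\eps)$ is Lipschitz in $U_\eps$ and supported in $B_{\theta\delta}(0)$, so it is an admissible test function for the equation $(-\Delta)_p^s U_\eps = U_\eps^{p_s^\ast - 1}$.

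For \eqref{21}, I apply the pointwise inequality above with $a = U_\eps(x)$, $b = U_\eps(y)$, integrate against $|x-y|^{-(N+sp)}$, and then test \eqref{7} with $g_{\eps,\delta}(U_\eps)$ to obtain
\[
\norm{u_{\eps,\delta}}^p \le \int_{\R^N} U_\eps^{p_s^\ast - 1}\, g_{\eps,\delta}(U_\eps)\, dx.
\]
Splitting along the definition of $g_{\eps,\delta}$, the right side equals
\[
\int_{|x|\le\delta} U_\eps^{p_s^\ast}\,dx + m_{\eps,\delta}^p\!\!\int_{\delta\le|x|\le\theta\delta}\!\! U_\eps^{p_s^\ast}\,dx + (m_{\eps,\delta}^{p-1}-1)U_\eps(\delta)\!\!\int_{|x|\le\delta}\!\! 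U_\eps^{p_s^\ast-1}\,dx - m_{\eps,\delta}^p U_\eps(\theta\delta)\!\!\int_{\delta\le|x|\le\theta\delta}\!\! U_\eps^{p_s^\ast-1}\,dx.
\]
By \eqref{11} one has $1 \le m_{\eps,\delta} \le 2$, so the last term can be dropped and the first two combine into $S^{N/sp}$ plus a tail $\int_{|x|\ge\delta}U_\eps^{p_s^\ast}\,dx$ controlled by $(\eps/\delta)^{N/(p-1)}$ (the same tail as in \eqref{23} below). The third term, via $y = x/\eps$, the upper bound in \eqref{10}, and the integrability of $U^{p_s^\ast-1}$ on $\R^N$, is $O((\eps/\delta)^{(N-sp)/(p-1)})$, and this dominates, yielding \eqref{21}.

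For \eqref{22}, the identity \eqref{20} gives $u_{\eps,\delta} = U_\eps$ on $B_\delta$, hence
\[
\pnorm{u_{\eps,\delta}}^p \ge \int_{|x|\le\delta} U_\eps^p\,dx = \eps^{sp}\int_{|y|\le\delta/\eps} U(y)^p\,dy.
\]
The lower bound in \eqref{10} reduces the matter to evaluating $\int_1^{\delta/\eps} r^{N-1-p(N-sp)/(p-1)}\,dr$; the exponent is exactly $-1$ when $N=sp^2$, producing the logarithm, and is strictly less than $-1$ when $N>sp^2$, in which case the integral plus the bounded contribution from $|y|\le 1$ stays above a positive constant (recall $\delta/\eps \ge 2$). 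Finally, for \eqref{23}, combining \eqref{20}, \eqref{8}, and the scaling $\int U_\eps^{p_s^\ast}\,dx = \int U^{p_s^\ast}\,dy$ gives
\[
\pnorm[p_s^\ast]{u_{\eps,\delta}}^{p_s^\ast} \ge S^{N/sp} - \int_{|y|>\delta/\eps} U(y)^{p_s^\ast}\,dy,
\]
and the upper bound in \eqref{10} estimates the last integral by $C\int_{\delta/\eps}^\infty r^{-N/(p-1)-1}\,dr = C(\eps/\delta)^{N/(p-1)}$.

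The main obstacle is the bookkeeping in \eqref{21}: the leading $\int U_\eps^{p_s^\ast}\,dx = S^{N/sp}$ must be cleanly separated from the four correction pieces above, and one has to verify that the single dominant remainder, coming from $U_\eps(\delta)\int_{|x|\le\delta} U_\eps^{p_s^\ast-1}\,dx$, gives exactly the exponent $(N-sp)/(p-1)$ (the faster decay $(\eps/\delta)^{N/(p-1)}$ from the $U_\eps^{p_s^\ast}$-tail is then automatically absorbed since $\eps/\delta < 1$). The estimates \eqref{22} and \eqref{23} are essentially direct consequences of \eqref{20} and the asymptotics \eqref{10}.
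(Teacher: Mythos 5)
Your proof is correct and follows essentially the same route as the paper: all three estimates hinge on testing $(-\Delta)_p^s U_\eps = U_\eps^{p_s^\ast-1}$ with $g_{\eps,\delta}(U_\eps)$ via Brasco--Parini's Lemma A.2 for \eqref{21}, and on \eqref{20}, \eqref{8}, and the asymptotics \eqref{10} for \eqref{22}--\eqref{23}. The only (cosmetic) difference is in \eqref{21}: the paper bounds $\int U_\eps^{p_s^\ast-1}(g_{\eps,\delta}(U_\eps)-U_\eps)\,dx$ by observing the uniform pointwise inequality $g_{\eps,\delta}(t)-t \le U_\eps(\delta)\,m_{\eps,\delta}^{p-1}$ for all $t\ge 0$ and then using the finiteness of $\int U_\eps^{p_s^\ast-1}\,dx$, whereas you split the integral into the three regimes of $g_{\eps,\delta}$ and bound each piece; both yield the same dominant term $O((\eps/\delta)^{(N-sp)/(p-1)})$.
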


\begin{proof}
Using Brasco and Parini \cite[Lemma A.2]{BrPa} and testing the equation $(- \Delta)_p^s\, U_\eps = U_\eps^{p_s^\ast - 1}$ with $g_{\eps,\delta}(U_\eps) \in W^{s,p}_0(\Omega)$ gives
\begin{align*}
\norm{G_{\eps,\delta}(U_\eps)}^p & \le \int_{\R^{2N}} \frac{(U_\eps(x) - U_\eps(y))^{p-1}\, (g_{\eps,\delta}(U_\eps(x)) - g_{\eps,\delta}(U_\eps(y)))}{|x - y|^{N+sp}}\, dx dy  \\
&= \int_{\R^N} U_\eps(x)^{p_s^\ast - 1}\, g_{\eps,\delta}(U_\eps(x))\, dx  \\
&= \pnorm[p_s^\ast]{U_\eps}^{p_s^\ast} + \int_{\R^N} (g_{\eps,\delta}(U_\eps(x)) - U_\eps(x))\, U_\eps(x)^{p_s^\ast - 1}\, dx.
\end{align*}
We have $\pnorm[p_s^\ast]{U_\eps}^{p_s^\ast} = S^{N/sp}$ by \eqref{8},
\begin{align*}
g_{\eps,\delta}(t) - t \le U_\eps(\delta)\, m_{\eps,\delta}^{p-1} & = \frac{1}{\eps^{(N-sp)/p}}\; U\bigg(\frac{\delta}{\eps}\bigg)\! \left[1 - U\bigg(\dfrac{\theta \delta}{\eps}\bigg)\bigg/U\bigg(\dfrac{\delta}{\eps}\bigg)\right]^{-(p-1)}\\[5pt]
&\le 2^{p-1}\, c_2\, \frac{\eps^{(N-sp)/p(p-1)}}{\delta^{(N-sp)/(p-1)}}, \,\,\quad \forall t \ge 0
\end{align*}
by \eqref{10} and \eqref{11},
\[
\int_{\R^N} U_\eps(x)^{p_s^\ast - 1}\, dx = \eps^{(N-sp)/p} \int_{\R^N} U(x)^{p_s^\ast - 1}\, dx,
\]
and the last integral is finite by \eqref{10} again, so \eqref{21} follows.
Using \eqref{20},
\[
\int_{\R^N} u_{\eps,\delta}(x)^p\, dx \ge \int_{B_\delta(0)} u_{\eps,\delta}(x)^p\, dx = \int_{B_\delta(0)} U_\eps(x)^p\, dx = \eps^{sp} \int_{B_{\delta/\eps}(0)} U(x)^p\, dx,
\]
and the last integral is greater than or equal to
\[
\int_1^{\delta/\eps} U(r)^p\, r^{N-1}\, dr \ge c_1^p \int_1^{\delta/\eps} r^{-(N-sp^2)/(p-1)-1}\, dr
\]
by \eqref{10}. A direct evaluation of the integral on the right gives \eqref{22} since $\delta/\eps \ge 2$.
Using \eqref{20} again,
\begin{align*}
\int_{\R^N} u_{\eps,\delta}(x)^{p_s^\ast}\, dx &\ge \int_{B_\delta(0)} u_{\eps,\delta}(x)^{p_s^\ast}\, dx = \int_{B_\delta(0)} U_\eps(x)^{p_s^\ast}\, dx \\
&= S^{N/sp} - \int_{B_{\delta/\eps}(0)^c} U(x)^{p_s^\ast}\, dx
\end{align*}
by \eqref{8}. By \eqref{10}, the last integral is less than or equal to
\[
c_2^{p_s^\ast} \int_{\delta/\eps}^\infty r^{-N/(p-1)-1}\, dr = \frac{(p - 1)\, c_2^{p_s^\ast}}{N} \left(\frac{\eps}{\delta}\right)^{N/(p-1)},
\]
so \eqref{23} follows.
\end{proof}

\noindent
We note that Lemma \ref{Lemma 3} gives the following estimate for
\[
S_{\eps,\delta}(\lambda) := \frac{\norm{u_{\eps,\delta}}^p - \lambda \pnorm{u_{\eps,\delta}}^p}{\pnorm[p_s^\ast]{u_{\eps,\delta}}^p}:
\]
there exists a constant $C = C(N,p,s) > 0$ such that for any $\eps \le \delta/2$,
\begin{equation} \label{40}
S_{\eps,\delta}(\lambda) \le \begin{cases}
S - \dfrac{\lambda}{C}\; \eps^{sp}\, \log \bigg(\dfrac{\delta}{\eps}\bigg) + C\, \bigg(\dfrac{\eps}{\delta}\bigg)^{sp}, & N = sp^2\\[10pt]
S - \dfrac{\lambda}{C}\; \eps^{sp} + C\, \bigg(\dfrac{\eps}{\delta}\bigg)^{(N-sp)/(p-1)}, & N > sp^2.
\end{cases}
\end{equation}

\medskip

\section{Proof of the main result}

In this section we prove Theorem \ref{Theorem 1}. For $0 < \lambda < \lambda_1$, mountain pass theorem and \eqref{40} will give us a positive critical level of $I_\lambda$ below the threshold level for compactness given in Proposition \ref{Proposition 1}. For $\lambda \ge \lambda_1$, we will use the abstract linking theorem, Theorem \ref{Theorem 2}.

\subsection{Case 1: $N \ge sp^2$ and $0 < \lambda < \lambda_1$}

We have
\[
I_\lambda(u) \ge \frac{1}{p} \left(1 - \frac{\lambda}{\lambda_1}\right) \norm{u}^p - \frac{1}{p_s^\ast\, S^{p_s^\ast/p}} \norm{u}^{p_s^\ast},
\]
so the origin is a strict local minimizer of $I_\lambda$. Fix $\delta > 0$ so small that $B_{\theta \delta}(0) \strictsubset \Omega$, so that $\supp u_{\eps,\delta} \subset \Omega$ by \eqref{20}. Noting that
\[
I_\lambda(Ru_{\eps,\delta}) = \frac{R^p}{p} \left(\norm{u_{\eps,\delta}}^p - \lambda \pnorm{u_{\eps,\delta}}^p\right) - \frac{R^{p_s^\ast}}{p_s^\ast} \pnorm[p_s^\ast]{u_{\eps,\delta}}^{p_s^\ast} \to - \infty \quad \text{as } R \to + \infty,
\]
fix $R_0 > 0$ so large that $I_\lambda(R_0 u_{\eps,\delta}) < 0$. Then let
\[
\Gamma = \set{\gamma \in C([0,1],W^{s,p}_0(\Omega)) : \gamma(0) = 0,\, \gamma(1) = R_0 u_{\eps,\delta}}
\]
and set
\[
c := \inf_{\gamma \in \Gamma}\, \max_{t \in [0,1]}\, I_\lambda(\gamma(t)) > 0.
\]
Since $t \mapsto tR_0 u_{\eps,\delta}$ is a path in $\Gamma$,
\begin{equation} \label{24}
c \le \max_{t \in [0,1]}\, I_\lambda(tR_0 u_{\eps,\delta}) = \frac{s}{N} \left(\frac{\norm{u_{\eps,\delta}}^p - \lambda \pnorm{u_{\eps,\delta}}^p}{\pnorm[p_s^\ast]{u_{\eps,\delta}}^p}\right)^{N/sp} = \frac{s}{N}\, S_{\eps,\delta}(\lambda)^{N/sp}.
\end{equation}
By \eqref{40},
\[
S_{\eps,\delta}(\lambda) \le \begin{cases}
S + \left(C - \dfrac{\lambda}{C} \abs{\log \eps}\right) \eps^{sp}, & N = sp^2\\[10pt]
S - \left(\dfrac{\lambda}{C} - C\, \eps^{(N-sp^2)/(p-1)}\right) \eps^{sp}, & N > sp^2,
\end{cases}
\]
so $S_{\eps,\delta}(\lambda) < S$ if $\eps > 0$ is sufficiently small. So 
$$
c < \dfrac{s}{N}\, S^{N/sp}
$$ 
by \eqref{24}, and hence $I_\lambda$ satisfies the \PS{c} condition by Proposition \ref{Proposition 1}. Then $c$ is a critical level of $I_\lambda$ by the mountain pass theorem.

\subsection{Case 2: $N > sp^2$ and $\lambda > \lambda_1$ is not one of the eigenvalues $\lambda_k$}

We have $\lambda_k < \lambda < \lambda_{k+1}$ for some $k \in \N$, and then $i(\Psi^{\lambda_k}) = i(\M \setminus \Psi_{\lambda_{k+1}}) = k$ by \eqref{5}. In what follows
\[
\pi(u) = \frac{u}{\norm{u}}, \quad \pi_p(u) = \frac{u}{\pnorm{u}}, \quad u \in W^{s,p}_0(\Omega) \setminus \set{0}
\]
are the radial projections onto
\[
\M = \bgset{u \in W^{s,p}_0(\Omega) : \norm{u} = 1}, \quad \M_p = \bgset{u \in W^{s,p}_0(\Omega) : \pnorm{u} = 1},
\]
respectively.

\begin{proposition} \label{Proposition 2}
If $\lambda_k < \lambda_{k+1}$, then $\Psi^{\lambda_k}$ has a compact symmetric subset $E$ with $i(E) = k$ such that
\[
\pnorm[\infty]{(- \Delta)_p^s\, v} \le C \quad \forall v \in E,
\]
where $C = C(N,\Omega,p,s,k) > 0$. In particular,
\[
\pnorm[\infty]{v} \le C \quad \forall v \in E.
\]
\end{proposition}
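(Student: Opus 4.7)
My plan is to take $E$ to be the set of critical points of $\Psi$ restricted to $\M$ with critical values at most $\lambda_k$, namely the $W^{s,p}_0$-normalized weak eigenfunctions of $(-\Delta)_p^s$ with eigenvalue $\le \lambda_k$. Each such element satisfies the eigenvalue equation, which via Lemma~\ref{Lemma 2} will supply the uniform $L^\infty$-bound; the identity $i(E) = k$ will come from \eqref{5} combined with an equivariant deformation argument.

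I would first verify that $\Psi$ satisfies the Palais--Smale condition on $\M$. A PS sequence $(u_n) \subset \M$ is bounded in $W^{s,p}_0(\Omega)$ by construction, so up to a subsequence $u_n \rightharpoonup u$ weakly and $u_n \to u$ in $L^p(\Omega)$ by compactness of the embedding; this already yields $\Psi(u_n) \to \Psi(u)$. The vanishing of the constrained differential translates, via a Lagrange multiplier, into the approximate eigenvalue equation $(-\Delta)_p^s u_n - \mu_n |u_n|^{p-2} u_n \to 0$ in $W^{-s,p'}(\Omega)$ with $\mu_n \to \Psi(u)$, and strong convergence $u_n \to u$ then follows from the standard $(S_+)$-property of $(-\Delta)_p^s$.

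Set
\[
E := \bgset{v \in \M : v \text{ is a critical point of } \Psi|_\M,\ \Psi(v) \le \lambda_k}.
\]
Each $v \in E$ solves $(-\Delta)_p^s v = \Psi(v)\, |v|^{p-2} v$ with $\Psi(v) \le \lambda_k$, so $E$ is $\Z_2$-symmetric, and PS together with the gap $\lambda_k < \lambda_{k+1}$ makes $E$ compact in $W^{s,p}_0(\Omega)$. Since $E \subset \Psi^{\lambda_k}$, \eqref{5} gives $i(E) \le k$. For the reverse inequality, suppose $i(E) < k$; by upper semicontinuity of the cohomological index on compact sets there is a symmetric open neighborhood $N \supset E$ in $\M$ with $i(N) < k$, and the equivariant deformation lemma, applied across the critical values in $[0,\lambda_k]$ with uniform PS on strips between consecutive ones, yields an equivariant homeomorphism $\eta$ of $\M$ with $\eta(\Psi^{\lambda_k}) \subset N$. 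By monotonicity and homotopy invariance of $i$, $k = i(\Psi^{\lambda_k}) \le i(N) < k$, a contradiction; hence $i(E) = k$.

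For the pointwise bound, each $v \in E$ solves $(-\Delta)_p^s v = f$ with $f = \mu\, |v|^{p-2} v$ and $\mu \le \lambda_k$. Starting from $v \in L^{p_s^\ast}$, so that $f \in L^{p_s^\ast/(p-1)}$, I iterate Lemma~\ref{Lemma 2}: an $L^r$-bound on $v$ upgrades to $L^{N(p-1)q/(N-spq)}$ with $q = r/(p-1)$, and a direct computation shows the exponent strictly increases and reaches $q > N/sp$ in finitely many steps; at that point the second case of Lemma~\ref{Lemma 2} yields $\pnorm[\infty]{v} \le C$ with $C = C(N,\Omega,p,s,k)$ (the dependence on $k$ entering through $\lambda_k$). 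Therefore $\pnorm[\infty]{(-\Delta)_p^s v} = \mu\, \pnorm[\infty]{v}^{p-1} \le \lambda_k\, C^{p-1}$, completing the proof. The main obstacle is the lower bound $i(E) \ge k$: while the deformation lemma is classical, making the pseudo-gradient flow $\Z_2$-equivariant and absorbing possibly many (non-isolated) critical values in $[0,\lambda_k]$ requires a careful partition-of-unity construction along dyadic strips on which PS holds uniformly.
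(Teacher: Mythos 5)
Your proposal chooses $E$ to be the normalized critical set $\set{v\in\M : \Psi'|_\M(v)=0,\ \Psi(v)\le\lambda_k}$ and then tries to prove $i(E)=k$ by an equivariant deformation argument. This is genuinely different from the paper's proof, which avoids deformation theory entirely: there $E=\pi\big(J^{n+1}(\pi_p(\Psi^{\lambda_k}))\big)$, where $J(w)=\pi_p(\B(w))$ and $\B(w)\in W^{s,p}_0(\Omega)$ solves $(-\Delta)_p^s u=|w|^{p-2}w$. Because $J$ and $\pi$ are odd and continuous and $J(A)\subset A$ for $A=\pi_p(\Psi^{\lambda_k})$, monotonicity of the cohomological index under odd maps gives $i(J^{n+1}(A))=i(A)=k$ immediately, while the uniform $L^\infty$ bound comes from iterating Lemma~\ref{Lemma 2} along $J, J^2,\dots$ until the Lebesgue exponent passes $N/sp$. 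No Palais--Smale theory and no deformation lemma are needed.

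Unfortunately your argument has a genuine gap: the claim that the critical set $E=K\cap\Psi^{\lambda_k}$ satisfies $i(E)=k$, and the deformation lemma invoked to prove it, are both false in general. There is no version of the second deformation lemma producing an odd map $\eta$ with $\eta(\Psi^{\lambda_k})$ contained in an arbitrarily small neighborhood of the \emph{entire} critical set when $[\lambda_1,\lambda_k]$ contains several critical values: the heteroclinic flow lines joining critical points at different levels persist in any such retract and prevent the image from being pushed into a small neighborhood of $E$. A finite-dimensional model makes the failure concrete. Take the even functional $f(x,y,z)=z^2+\eps x^2$ on $S^2\subset\R^3$ with $0<\eps<1$; its critical pairs are $\pm(0,1,0)$ at level $0$, $\pm(1,0,0)$ at level $\eps$, and $\pm(0,0,1)$ at level $1$. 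The index minimax yields $\lambda_1=0<\lambda_2=\eps<\lambda_3=1$, so $k=2$ and indeed $i(\set{f\le\eps})=2$, the sublevel set being a pinched equatorial band retracting onto a great circle. Yet $E=\set{\pm(0,1,0),\pm(1,0,0)}$ is a union of two antipodal doubletons and $i(E)=1<2$ (its quotient is two points, so $f^*(\omega)=0$ already in $H^1$). If your equivariant deformation existed, monotonicity of $i$ under odd maps would give $2=i(\Psi^{\lambda_2})\le i(N)\le 1$, a contradiction; hence the deformation cannot exist. More generally, whenever the first $k$ critical values are distinct and each carries a single antipodal pair of eigenfunctions, your $E$ is a disjoint union of $k$ antipodal pairs and $i(E)=1$ regardless of $k$. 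The cohomological index is not additive over the disconnected pieces of the critical set, so passing from the sublevel set to the critical set loses index information in an essential way.

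Your verification of the Palais--Smale condition for $\Psi$ on $\M$ is fine, as is the bootstrap of Lemma~\ref{Lemma 2} giving the $L^\infty$ bound for genuine eigenfunctions. But since the set you define need not have index $k$, the proof cannot close. The key idea you are missing, and the essential content of the paper's proof, is to keep the \emph{whole} sublevel set $\Psi^{\lambda_k}$ (which does carry index $k$) and improve its regularity by composing with the odd smoothing operator $J$, rather than restricting attention to critical points.
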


\begin{proof}
For $w \in L^q(\Omega)$ with $q \ge \max \set{1,(p - 1)\, (p_s^\ast)'}$, the equation $(- \Delta)_p^s\, u = |w|^{p-2}\, w$ has a unique weak solution $u = \B(w) \in W^{s,p}_0(\Omega)$. By Lemma \ref{Lemma 2},
\begin{equation} \label{25}
\pnorm[\gamma(q)]{\B(w)} \le C(q) \pnorm[q]{w},
\end{equation}
where
\[
\gamma(q) = \begin{cases}
\dfrac{N\, (p - 1)\, q}{N\, (p - 1) - spq}, & \dfrac{q}{p - 1} < \dfrac{N}{sp}\\[10pt]
\infty, & \dfrac{N}{sp} < \dfrac{q}{p - 1} \le \infty.
\end{cases}
\]
For $w \in \M_p$, let $J(w) = \pi_p(u) \in \M_p$, where $u = \B(w)$. Testing $(- \Delta)_p^s\, u = |w|^{p-2}\, w$ with $u,\, w$ and using the H\"{o}lder inequality gives
\begin{align*}
\norm{u}^p &= \int_\Omega |w|^{p-2}\, wu\, dx \le \pnorm{w}^{p-1} \pnorm{u} = \pnorm{u}, \\
1 &= \pnorm{w}^p = \int_{\R^{2N}} \frac{(u(x) - u(y))^{p-1}\, (w(x) - w(y))}{|x - y|^{N+sp}}\, dx dy \le \norm{u}^{p-1} \norm{w},
\end{align*}
respectively, so
\begin{equation} \label{26}
\norm{J(w)} = \frac{\norm{u}}{\pnorm{u}} \le \frac{1}{\norm{u}^{p-1}} \le \norm{w}, \qquad \pnorm{\B(w)} = \pnorm{u} \ge \norm{u}^p \ge \frac{1}{\norm{w}^{p/(p-1)}}.
\end{equation}
Let
\[
A = \pi_p(\Psi^{\lambda_k}) = \bgset{w \in \M_p : \norm{w}^p \le \lambda_k}.
\]
Then $i(A) = i(\Psi^{\lambda_k}) = k$ by the monotonicity of the index and \eqref{5}, and $A$ is strongly compact in $L^p(\Omega)$. By \eqref{26}, $J(A) \subset A$ and
\begin{equation} \label{27}
\pnorm{\B(w)} \ge \frac{1}{\lambda_k^{1/(p-1)}} \quad \forall w \in A.
\end{equation}
For $w \in A$, if $p/(p - 1) > N/sp$, then $\gamma(p) = \infty$ and hence
\[
\pnorm[\infty]{J(w)} = \frac{\pnorm[\infty]{\B(w)}}{\pnorm{\B(w)}} \le C(p)\, \lambda_k^{1/(p-1)} \pnorm{w} = C(p)\, \lambda_k^{1/(p-1)}
\]
by \eqref{25} and \eqref{27}. Otherwise, take $\max \set{1,(p - 1)\, (p_s^\ast)'} \le q_0 < p$ and define the sequence $\seq{q_i}$ recursively by setting $q_i = \gamma(q_{i-1})$ if $q_{i-1}/(p - 1) < N/sp$, in which case
\[
q_i - q_{i-1} = \frac{sp\, q_{i-1}^2}{N\, (p - 1) - sp\, q_{i-1}} \ge \frac{sp}{N\, (p - 1) - sp} > 0.
\]
Hence $q_0$ may be chosen so that $q_{n-1}/(p - 1) < N/sp < q_n/(p - 1)$ for some $n \ge 1$. Iterating \eqref{25} and \eqref{27}, and using the H\"{o}lder inequality at the last step then gives
\begin{align} \label{28}
\pnorm[\infty]{J^n(w)} &= \frac{\pnorm[\infty]{\B(J^{n-1}(w))}}{\pnorm{\B(J^{n-1}(w))}} \le C(q_n)\, \lambda_k^{1/(p-1)} \pnorm[q_n]{J^{n-1}(w)} \le \cdots\\
&\le C(q_n) \cdots C(q_0)\, \lambda_k^{(n+1)/(p-1)} \pnorm[q_0]{w} \le C\, \lambda_k^{(n+1)/(p-1)}. \notag
\end{align}
Let $\widetilde{A} = J^{n+1}(A) \subset A$. For each $\widetilde{v} \in \widetilde{A}$, there exists $\widetilde{w} \in J^n(A) \subset A$ such that $\widetilde{v} = J(\widetilde{w}) = u/\pnorm{u}$, where $u = \B(\widetilde{w})$. Then
\[
(- \Delta)_p^s\, \widetilde{v} = \frac{(- \Delta)_p^s\, u}{\pnorm{u}^{p-1}} = \frac{|\widetilde{w}|^{p-2}\, \widetilde{w}}{\pnorm{\B(\widetilde{w})}^{p-1}},
\]
so
\begin{equation} \label{29}
\pnorm[\infty]{(- \Delta)_p^s\, \widetilde{v}} = \frac{\pnorm[\infty]{\widetilde{w}}^{p-1}}{\pnorm{\B(\widetilde{w})}^{p-1}} \le C\, \lambda_k^{n+2}
\end{equation}
by \eqref{27} and \eqref{28}.\ Since the imbedding $W^{s,p}_0(\Omega) \hookrightarrow L^p(\Omega)$ is compact and $J$ is an odd continuous map from $L^p(\Omega)$ to $W^{s,p}_0(\Omega)$, $\widetilde{A}$ is a compact set and $i(\widetilde{A}) = i(A) = k$.

Let $E = \pi(\widetilde{A})$ and note that $E$ is compact with $i(E) = i(\widetilde{A}) = k$. For each $v \in E$, there exists $\widetilde{v} \in \widetilde{A} \subset A$ such that $v = \widetilde{v}/\norm{\widetilde{v}}$. Then
\[
\Psi(v) = \frac{\norm{\widetilde{v}}^p}{\pnorm{\widetilde{v}}^p} \le \lambda_k,
\]
so $E \subset \Psi^{\lambda_k}$. Since $1 = \pnorm{\widetilde{v}} \le C \norm{\widetilde{v}}$,
\[
\pnorm[\infty]{(- \Delta)_p^s\, v} = \frac{\pnorm[\infty]{(- \Delta)_p^s\, \widetilde{v}}}{\norm{\widetilde{v}}^{p-1}} \le C\, \lambda_k^{n+2}
\]
by \eqref{29}.
\end{proof}

\noindent
For $v \in E$, let $v_\delta = v \eta_\delta$, where $\eta_\delta$ is the cut-off function in Lemma \ref{Lemma 6}, and let
\[
E_\delta = \set{\pi(v_\delta) : v \in E}.
\]

\begin{proposition} \label{Proposition 3}
There exists a constant $C = C(N,\Omega,p,s,k) > 0$ such that for all sufficiently small $\delta > 0$,
\begin{gather}
\label{30} \frac{1}{C} \le \pnorm[q]{w} \le C \quad \forall w \in E_\delta,\, 1 \le q \le \infty,\\[7.5pt]
\label{31} \sup_{w \in E_\delta}\, \Psi(w) \le \lambda_k + C \delta^{N-sp},
\end{gather}
$E_\delta \cap \Psi_{\lambda_{k+1}} = \emptyset$, $i(E_\delta) = k$, and $\supp w \subset B_{2 \theta \delta}(0)^c$ for all $w \in E_\delta$. In particular, the supports of $w$ and $\pi(u_{\eps,\delta})$ are disjoint and hence $\pi(u_{\eps,\delta}) \not\in E_\delta$.
\end{proposition}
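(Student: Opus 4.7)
\medskip

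\noindent
The strategy rests on Proposition \ref{Proposition 2}: every $v \in E$ satisfies $\pnorm[\infty]{v} \le C$ and $\pnorm[\infty]{(- \Delta)_p^s\, v} \le C$ uniformly, which lets us apply Lemma \ref{Lemma 6} to $v$ with the cutoff $\eta_\delta$ to obtain $\norm{v_\delta}^p \le 1 + C\, \delta^{N-sp}$ for every $v \in E$. The support statement is essentially built into the construction: since $\eta_\delta$ vanishes on $B_{2 \theta \delta}(0)$, one has $\supp v_\delta \subset B_{2 \theta \delta}(0)^c$ and hence $\supp w \subset B_{2 \theta \delta}(0)^c$ for every $w \in E_\delta$. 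Combined with \eqref{20}, which places $\supp u_{\eps,\delta}$ inside $\overline{B_{\theta \delta}(0)}$, the two supports are disjoint and therefore $\pi(u_{\eps,\delta}) \notin E_\delta$.

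For the bounds \eqref{30}, the upper bound is immediate from $|v_\delta| \le |v| \le C$ together with a uniform lower bound $\norm{v_\delta} \ge c > 0$. The latter follows from the Sobolev inequality $\norm{v_\delta}^p \ge S\, \pnorm[p_s^\ast]{v_\delta}^p$ once one knows that $\pnorm[p_s^\ast]{v_\delta}$ is uniformly bounded below over $v \in E$; this holds because $E$ is compact and disjoint from $0$ in $L^{p_s^\ast}$, so $\pnorm[p_s^\ast]{v}$ has a positive minimum there, while $\pnorm[p_s^\ast]{v - v_\delta} \le \pnorm[\infty]{v}\, |B_{3 \theta \delta}(0)|^{1/p_s^\ast} \to 0$ uniformly. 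The uniform lower bound on $\pnorm[q]{w}$ proceeds similarly: $\pnorm[1]{v - v_\delta} \le C \delta^N$ forces $\pnorm[1]{v_\delta} \ge c > 0$ uniformly, and H\"{o}lder's inequality upgrades this to $\pnorm[q]{v_\delta} \ge c / |\Omega|^{1-1/q}$, uniformly in $q \in [1,\infty]$. Dividing by $\norm{v_\delta}$ yields \eqref{30}.

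To prove \eqref{31}, use $v \in \Psi^{\lambda_k} \cap \M$ to get $\pnorm{v}^p \ge 1/\lambda_k$, and combine the elementary inequality $|\, |a|^p - |b|^p\, | \le p\, (|a| + |b|)^{p-1}\, |a - b|$ with $|v| \le C$ to obtain $|\pnorm{v_\delta}^p - \pnorm{v}^p| \le C \delta^N$. Together with $\norm{v_\delta}^p \le 1 + C \delta^{N-sp}$ from Lemma \ref{Lemma 6}, this gives
\[
\Psi(\pi(v_\delta)) = \frac{\norm{v_\delta}^p}{\pnorm{v_\delta}^p} \le \frac{1 + C \delta^{N-sp}}{1/\lambda_k - C \delta^N} \le \lambda_k + C \delta^{N-sp}
\]
for sufficiently small $\delta$, which is \eqref{31}; since $\lambda_k < \lambda_{k+1}$, it also yields $E_\delta \cap \Psi_{\lambda_{k+1}} = \emptyset$ for $\delta$ small. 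For the index identity, the map $\phi_\delta : v \mapsto \pi(v_\delta)$ is well-defined (by $\norm{v_\delta} > 0$), odd, and continuous from $E$ to $E_\delta$, so monotonicity of the cohomological index gives $i(E_\delta) \ge i(E) = k$; conversely, $E_\delta \subset \M \setminus \Psi_{\lambda_{k+1}}$ together with \eqref{5} gives $i(E_\delta) \le i(\M \setminus \Psi_{\lambda_{k+1}}) = k$, proving $i(E_\delta) = k$.

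The main technical point is not any single estimate but the requirement that all the rates be uniform over the entire set $E$. This uniformity is exactly what Proposition \ref{Proposition 2} and the $W^{s,p}_0$-compactness of $E$ provide; once that is in hand, the remaining ingredients are direct applications of Lemmas \ref{Lemma 2}, \ref{Lemma 6} and of the fractional Sobolev inequality.
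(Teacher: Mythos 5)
Your proof is correct and follows essentially the same route as the paper: the key ingredients are Proposition~\ref{Proposition 2} (uniform $L^\infty$ control on $v$ and $(-\Delta)_p^s v$ over $E$), Lemma~\ref{Lemma 6} for the upper bound on $\norm{v_\delta}^p$, and the support/index arguments, which are identical. The only small divergence is in how you bound $\norm{v_\delta}$ and $\pnorm[1]{v_\delta}$ away from zero: you invoke compactness of $E$ (in $L^{p_s^\ast}$ and $L^1$) to get a positive minimum, whereas the paper derives the lower bound $\pnorm[1]{v}\ge 1/C$ directly by testing $\norm{v}^p=\int_\Omega v\,(-\Delta)_p^s v\,dx\le C\,\pnorm[1]{v}$, and uses \eqref{33} with $q=p$ to get $\pnorm{v_\delta}^p\ge 1/\lambda_k - C\delta^N$; both give constants with the allowed dependence on $(N,\Omega,p,s,k)$, so this is a cosmetic difference.
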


\begin{proof}
Let $v \in E$ and let $w = \pi(v_\delta)$. We have
\[
\int_\Omega |v|^q\, dx = \int_{\Omega \setminus B_{3 \theta \delta}(0)} |v_\delta|^q\, dx + \int_{B_{3 \theta \delta}(0)} |v|^q\, dx \le \int_\Omega |v_\delta|^q\, dx + C \pnorm[\infty]{v}^q \delta^N,
\]
so
\begin{equation} \label{33}
\int_\Omega |v_\delta|^q\, dx \ge \int_\Omega |v|^q\, dx - C \delta^N
\end{equation}
by Proposition \ref{Proposition 2}. In particular, $\pnorm[1]{v_\delta} \ge \pnorm[1]{v} - C \delta^N$. On the other hand,
\[
1 = \norm{v}^p = \int_\Omega v\, (- \Delta)_p^s\, v\, dx \le \pnorm[\infty]{(- \Delta)_p^s\, v} \int_\Omega |v|\, dx \le C \pnorm[1]{v}
\]
by Proposition \ref{Proposition 2} again, so $\pnorm[1]{v_\delta} \ge 1/C - C \delta^N$. Since
\begin{equation} \label{32}
\norm{v_\delta}^p \le 1 + C \delta^{N-sp}
\end{equation}
by Lemma \ref{Lemma 6} and Proposition \ref{Proposition 2}, then
\[
\pnorm[1]{w} = \frac{\pnorm[1]{v_\delta}}{\norm{v_\delta}} \ge \frac{\dfrac{1}{C} - C \delta^N}{1 + C \delta^{(N-sp)/p}} = \frac{1}{C} + \O(\delta^{(N-sp)/p}),
\]
which together with the H\"{o}lder inequality gives the first half of \eqref{30}. By \eqref{33} with $q = p$,
\begin{equation} \label{39}
\pnorm{v_\delta}^p \ge \pnorm{v}^p - C \delta^N \ge \frac{1}{\lambda_k} - C \delta^N
\end{equation}
since $E \subset \Psi^{\lambda_k}$. So $\pnorm{v_\delta}$, and hence also $\norm{v_\delta}$, is bounded away from zero. Since $\pnorm[\infty]{v}$ is bounded by Proposition \ref{Proposition 2} and $0 \le \eta_\delta \le 1$, $\pnorm[\infty]{v_\delta}$ is bounded, so this shows that $\pnorm[\infty]{w} = \pnorm[\infty]{v_\delta}/\norm{v_\delta}$ is bounded, which gives the second half of \eqref{30}.

Combining \eqref{32} and \eqref{39} gives
\[
\Psi(w) = \frac{\norm{v_\delta}^p}{\pnorm{v_\delta}^p} \le \frac{1 + C \delta^{N-sp}}{\dfrac{1}{\lambda_k} - C \delta^N} = \lambda_k + \O(\delta^{N-sp}).
\]
Fix $\delta > 0$ so small that $\lambda_k + C \delta^{N-sp} < \lambda_{k+1}$. Then $E_\delta \subset \M \setminus \Psi_{\lambda_{k+1}}$ by \eqref{31}, and hence $i(E_\delta) \le i(\M \setminus \Psi_{\lambda_{k+1}}) = k$ by the monotonicity of the index and \eqref{5}. On the other hand, $E \to E_\delta,\, v \mapsto \pi(v_\delta)$ is an odd continuous map and hence $i(E_\delta) \ge i(E) = k$. So $i(E_\delta) = k$.

Finally, $\supp \pi(v_\delta) = \supp v_\delta \subset \supp \eta_\delta \subset B_{2 \theta \delta}(0)^c$ for all $v \in E$, and 
$$
\supp \pi(u_{\eps,\delta}) = \supp u_{\eps,\delta} \subset \closure{B_{\theta \delta}(0)},
$$ 
by virtue of \eqref{20}.
\end{proof}

We are now ready to apply Theorem \ref{Theorem 2} to obtain a nontrivial critical point of $I_\lambda$ in the case where $\lambda > \lambda_1$ is not one of the eigenvalues $\lambda_k$. Fix $\lambda'$ such that $\lambda_k < \lambda' < \lambda < \lambda_{k+1}$, and let $\delta > 0$ be so small that the conclusions of Proposition \ref{Proposition 3} hold with $\lambda_k + C \delta^{N-sp} < \lambda'$, in particular,
\begin{equation} \label{41}
\Psi(w) < \lambda' \quad \forall w \in E_\delta.
\end{equation}
Then take $A_0 = E_\delta$ and $B_0 = \Psi_{\lambda_{k+1}}$, and note that $A_0$ and $B_0$ are disjoint nonempty closed symmetric subsets of $\M$ such that
\[
i(A_0) = i(\M \setminus B_0) = k
\]
by Proposition \ref{Proposition 3} and \eqref{5}. Now let $0 < \eps \le \delta/2$, let $R > r > 0$, let $v_0 = \pi(u_{\eps,\delta}) \in \M \setminus E_\delta$, and let $A$, $B$ and $X$ be as in Theorem \ref{Theorem 2}.

For $u \in \Psi_{\lambda_{k+1}}$,
\[
I_\lambda(ru) \ge \frac{1}{p} \left(1 - \frac{\lambda}{\lambda_{k+1}}\right) r^p - \frac{1}{p_s^\ast\, S^{p_s^\ast/p}}\, r^{p_s^\ast}.
\]
Since $\lambda < \lambda_{k+1}$, it follows that $\inf I_\lambda(B) > 0$ if $r$ is sufficiently small.

Next we show that $I_\lambda \le 0$ on $A$ if $R$ is sufficiently large. For $w \in E_\delta$ and $t \ge 0$,
\[
I_\lambda(tw) \le \frac{t^p}{p} \left(1 - \frac{\lambda}{\Psi(w)}\right) \le 0
\]
by \eqref{41}. Now let $w \in E_\delta$ and $0 \le t \le 1$, and set $u = \pi((1 - t)\, w + tv_0)$. Clearly, $\norm{(1 - t)\, w + tv_0} \le 1$, and since the supports of $w$ and $v_0$ are disjoint by Proposition \ref{Proposition 3},
\[
\pnorm[p_s^\ast]{(1 - t)\, w + tv_0}^{p_s^\ast} = (1 - t)^{p_s^\ast} \pnorm[p_s^\ast]{w}^{p_s^\ast} + t^{p_s^\ast} \pnorm[p_s^\ast]{v_0}^{p_s^\ast}.
\]
In view of \eqref{30} and since
\begin{equation} \label{42}
\pnorm[p_s^\ast]{v_0}^{p_s^\ast} = \frac{\pnorm[p_s^\ast]{u_{\eps,\delta}}^{p_s^\ast}}{\norm{u_{\eps,\delta}}^{p_s^\ast}} \ge \frac{1}{S^{N/(N-sp)}} + \O(\eps^{(N-sp)/(p-1)})
\end{equation}
by Lemma \ref{Lemma 3}, it follows that
\[
\pnorm[p_s^\ast]{u}^{p_s^\ast} = \frac{\pnorm[p_s^\ast]{(1 - t)\, w + tv_0}^{p_s^\ast}}{\norm{(1 - t)\, w + tv_0}^{p_s^\ast}} \ge \frac{1}{C}
\]
if $\eps$ is sufficiently small, where $C = C(N,\Omega,p,s,k) > 0$. Then
\[
I_\lambda(Ru) \le \frac{R^p}{p} - \frac{R^{p_s^\ast}}{p_s^\ast} \pnorm[p_s^\ast]{u}^{p_s^\ast} \le \frac{R^p}{p} - \frac{R^{p_s^\ast}}{p_s^\ast\, C} \le 0
\]
if $R$ is sufficiently large.
In view of \eqref{43} and Proposition \ref{Proposition 1}, it only remains to show that 
$$
\sup I_\lambda(X) < \dfrac{s}{N}\, S^{N/sp},
$$ 
if $\eps$ is sufficiently small. Noting that
\[
X = \set{\rho\, \pi((1 - t)\, w + tv_0) : w \in E_\delta,\, 0 \le t \le 1,\, 0 \le \rho \le R},
\]
let $w \in E_\delta$ and $0 \le t \le 1$, and set $u = \pi((1 - t)\, w + tv_0)$. Then
\begin{align} \label{35}
\sup_{0 \le \rho \le R}\, I_\lambda(\rho u) &\le \sup_{\rho \ge 0}\, \left[\frac{\rho^p}{p} \left(1 - \lambda \pnorm{u}^p\right) - \frac{\rho^{p_s^\ast}}{p_s^\ast} \pnorm[p_s^\ast]{u}^{p_s^\ast}\right] = \frac{s}{N} \left[\frac{\left(1 - \lambda \pnorm{u}^p\right)^+}{\pnorm[p_s^\ast]{u}^p}\right]^{N/sp} \\
&= \frac{s}{N} \left[\frac{\left(\norm{(1 - t)\, w + tv_0}^p - \lambda \pnorm{(1 - t)\, w + tv_0}^p\right)^+}{\pnorm[p_s^\ast]{(1 - t)\, w + tv_0}^p}\right]^{N/sp}.   \notag
\end{align}
Since $w = 0$ in $B_{2 \theta \delta}(0)$ by Proposition \ref{Proposition 3} and $v_0 = 0$ in $B_{\theta \delta}(0)^c$ by \eqref{20},
\begin{align} \label{36}
\norm{(1 - t)\, w + tv_0}^p & \le (1 - t)^p \int_{A_1} \frac{|w(x) - w(y)|^p}{|x - y|^{N+sp}}\, dx dy + t^p \int_{A_2} \frac{|v_0(x) - v_0(y)|^p}{|x - y|^{N+sp}}\, dx dy\\[5pt]
&+ 2 \int_{A_3} \frac{|(1 - t)\, w(x) - tv_0(y)|^p}{|x - y|^{N+sp}}\, dx dy =: (1 - t)^p\, I_1 + t^p\, I_2 + 2 I_3,   \notag
\end{align}
where 
$$
A_1 = B_{\theta \delta}(0)^c \times B_{\theta \delta}(0)^c,\quad 
A_2 = B_{2 \theta \delta}(0) \times B_{2 \theta \delta}(0),\quad 
A_3 = B_{2 \theta \delta}(0)^c \times B_{\theta \delta}(0).
$$ 
We estimate $I_3$ using the following elementary inequality: given $\kappa > 1$ and $p - 1 < q < p$, there exists a constant $C = C(\kappa,q) > 0$ such that
\[
|a + b|^p \le \kappa\, |a|^p + |b|^p + C\, |a|^{p-q}\, |b|^q \quad \forall a, b \in \R.
\]
Taking $\kappa = \lambda/\lambda'$ and, thanks to $N>sp^2$, choosing $q\in \ ]N(p - 1)/(N - sp), p[$, we get
\begin{align} \label{38}
I_3 &\le \frac{\lambda}{\lambda'}\, (1 - t)^p \int_{A_3} \frac{|w(x) - w(y)|^p}{|x - y|^{N+sp}}\, dx dy + t^p \int_{A_3} \frac{|v_0(x) - v_0(y)|^p}{|x - y|^{N+sp}}\, dx dy  \\
&+ C \int_{A_3} \frac{|w(x)|^{p-q}\, v_0(y)^q}{|x - y|^{N+sp}}\, dx dy =: \frac{\lambda}{\lambda'}\, (1 - t)^p\, I_4 + t^p\, I_5 + C J_q. \notag
\end{align}
Clearly, $I_1 + 2 I_4 \le \norm{w}^p = 1$ and $I_2 + 2 I_5 \le \norm{v_0}^p = 1$. By \eqref{30} and since 
$$
|x - y| \ge |x| - \theta \delta \ge |x|/2,\quad\text{on $A_3$},
$$
we have
\[
J_q \le \frac{C}{\norm{u_{\eps,\delta}}^q} \int_{A_3} \frac{u_{\eps,\delta}(y)^q}{|x|^{N+sp}}\, dx dy \le \frac{C}{\delta^{sp}} \int_{\R^N} u_{\eps,\delta}(y)^q\, dy
\]
since \eqref{23} implies that $\pnorm[p_s^\ast]{u_{\eps,\delta}}$, and hence also $\norm{u_{\eps,\delta}}$, is bounded away from zero if $\eps$ is sufficiently small. Recalling \eqref{defG}, it holds $G_{\eps,\delta}(t) \le t$ for all $t \ge 0$, and thus
\[
\int_{\R^N} u_{\eps,\delta}(y)^q\, dy \le \int_{\R^N} U_\eps(y)^q\, dy = \eps^{N-(N-sp)\, q/p} \int_{\R^N} U(y)^q\, dy,
\]
and the last integral is finite by \eqref{10} since $q > N(p - 1)/(N - sp)$. So combining \eqref{36} and \eqref{38} gives
\begin{equation}
\label{po}
\norm{(1 - t)\, w + tv_0}^p \le \frac{\lambda}{\lambda'}\, (1 - t)^p + t^p + C\, \eps^{N-(N-sp)\, q/p}.
\end{equation}
On the other hand, since the supports of $w$ and $v_0$ are disjoint,
\begin{align} \label{49}
\pnorm{(1 - t)\, w + tv_0}^p &= (1 - t)^p \pnorm{w}^p + t^p \pnorm{v_0}^p,\\
\pnorm[p_s^\ast]{(1 - t)\, w + tv_0}^{p_s^\ast} &= (1 - t)^{p_s^\ast} \pnorm[p_s^\ast]{w}^{p_s^\ast} + t^{p_s^\ast} \pnorm[p_s^\ast]{v_0}^{p_s^\ast}.
\notag
\end{align}
By \eqref{41}, $\pnorm{w}^p = 1/\Psi(w) > 1/\lambda'$. By \eqref{30}, $\pnorm[p_s^\ast]{w}$ is bounded away from zero, and \eqref{42} implies that so is $\pnorm[p_s^\ast]{v_0}$ if $\eps$ is sufficiently small, so the last expression in \eqref{49} is bounded away from zero. It follows from \eqref{po} and \eqref{49} that
\[
\frac{\norm{(1 - t)\, w + tv_0}^p - \lambda \pnorm{(1 - t)\, w + tv_0}^p}{\pnorm[p_s^\ast]{(1 - t)\, w + tv_0}^p} \le \frac{1 - \lambda \pnorm{v_0}^p}{\pnorm[p_s^\ast]{v_0}^p} + C\, \eps^{N-(N-sp)\, q/p}.
\]
Since $v_0 = u_{\eps,\delta}/\norm{u_{\eps,\delta}}$, the right-hand side is less than or equal to
\[
S_{\eps,\delta}(\lambda) + C\, \eps^{N-(N-sp)\, q/p} \le S - \left(\frac{\lambda}{C} - C\, \eps^{(N-sp^2)/(p-1)} - C\, \eps^{(N-sp)(1-q/p)}\right) \eps^{sp}
\]
by \eqref{40}. Since $N > sp^2$ and $q < p$, it follows from this that the last expression in \eqref{35} is strictly less than $\dfrac{s}{N}\, S^{N/sp}$ if $\eps$ is sufficiently small.

\subsection{Case 3: $N^2/(N + s) > sp^2$ and $\lambda = \lambda_k$}

Let $\lambda = \lambda_k < \lambda_{k+1}$, let $\delta > 0$ be so small that the conclusions of Proposition \ref{Proposition 3} hold with $\lambda_k + C \delta^{N-sp} < \lambda_{k+1}$, in particular, $\Psi(w) < \lambda_{k+1}$ for all $w \in E_\delta$, and take $A_0 = E_\delta$ and $B_0 = \Psi_{\lambda_{k+1}}$ as in the last subsection. Then let $0 < \eps \le \delta/2$, let $R > r > 0$, let $v_0 = \pi(u_{\eps,\delta}) \in \M \setminus E_\delta$, and let $A$, $B$ and $X$ be as in Theorem \ref{Theorem 2}. As before, $\inf I_\lambda(B) > 0$ if $r$ is sufficiently small and
\[
I_\lambda(R\, \pi((1 - t)\, w + tv_0)) \le 0 \quad \forall w \in E_\delta,\, 0 \le t \le 1
\]
if $R$ is sufficiently large. On the other hand,
\[
I_\lambda(tw) \le \frac{t^p}{p} \left(1 - \frac{\lambda_k}{\Psi(w)}\right) \le C R^p \delta^{N-sp} \quad \forall w \in E_\delta,\, 0 \le t \le R
\]
by \eqref{31}, where $C$ denotes a generic positive constant independent of $\eps$ and $\delta$. It follows that
\[
\sup I_\lambda(A) \le C R^p \delta^{N-sp}< \inf I_\lambda(B)
\]
if $\delta$ is sufficiently small. As in the last proof, it only remains to show that (see \eqref{35})
\begin{equation} \label{48}
\sup_{(w,t) \in E_\delta \times [0,1]}\, \frac{\norm{(1 - t)\, w + tv_0}^p - \lambda_k \pnorm{(1 - t)\, w + tv_0}^p}{\pnorm[p_s^\ast]{(1 - t)\, w + tv_0}^p} < S
\end{equation}
if $\eps$ and $\delta$ are suitably small.\
We estimate the integral $I_3$ in \eqref{36} using the elementary inequality
\begin{equation} \label{63}
|a + b|^p \le |a|^p + |b|^p + C \left(|a|^{p-1}\, |b| + |a|\, |b|^{p-1}\right) \quad \forall a, b \in \R
\end{equation}
to get
\begin{align} \label{44}
I_3  & \le (1 - t)^p \int_{A_3} \frac{|w(x) - w(y)|^p}{|x - y|^{N+sp}}\, dx dy + t^p \int_{A_3} \frac{|v_0(x) - v_0(y)|^p}{|x - y|^{N+sp}}\, dx dy\\[5pt]
&+ C\, (1 - t)^{p-1} \int_{A_3} \frac{|w(x)|^{p-1}\, v_0(y)}{|x - y|^{N+sp}}\, dx dy + C\, (1 - t) \int_{A_3} \frac{|w(x)|\, v_0(y)^{p-1}}{|x - y|^{N+sp}}\, dx dy \notag\\
&=: (1 - t)^p\, I_4 + t^p\, I_5 + C\, (1 - t)^{p-1} J_1 + C\, (1 - t) J_{p-1}.   \notag
\end{align}
As before, $I_1 + 2 I_4, I_2 + 2 I_5 \le 1$ and for $q = 1, p - 1$,
\begin{align*}
J_q := &\int_{A_3} \frac{|w(x)|^{p-q}\, v_0(y)^q}{|x - y|^{N+sp}}\, dx dy \le C \int_{A_3} \frac{u_{\eps,\delta}(y)^q}{|x|^{N+sp}}\, dx dy \le \frac{C}{\delta^{sp}} \int_{B_{\theta \delta}(0)} U_\eps(y)^q\, dy \\
\le &\frac{C\, \eps^{N-(N-sp)\, q/p}}{\delta^{sp}} \int_{B_{\theta \delta/\eps}(0)} U(y)^q\, dy.
\end{align*}
We take $\delta = \eps^\alpha$ with $\alpha \in (0,1)$ and use \eqref{10} to estimate the last integral to get 
$$
J_q \le C\, \eps^{(N-sp)[p\, (p-q-1)\, \alpha + q]/p\, (p-1)}.
$$ 
So combining \eqref{36} and \eqref{44} gives
\begin{equation} \label{46}
\norm{(1 - t)\, w + tv_0}^p \le (1 - t)^p + t^p + \widetilde{J}_1 + \widetilde{J}_{p-1},
\end{equation}
where
\[
\widetilde{J}_q := C\, (1 - t)^{p-q}\, J_q \le C\, (1 - t)^{p-q}\, \eps^{(N-sp)[p\, (p-q-1)\, \alpha + q]/p\, (p-1)}.
\]
Young's inequality then gives
\begin{equation} \label{52}
\widetilde{J}_q \le \frac{\kappa}{3}\, (1 - t)^{p_s^\ast} + C\, \eps^{sp + \beta_q(\alpha)} \kappa^{- \gamma_q}
\end{equation}
for any $\kappa > 0$, where
\begin{equation*}
\beta_q(\alpha) = \frac{[N^2 - sp^2\, (N + s)](p - 1)(p - q) - Np\, (N - sp)(p - q - 1)(\alpha_0 - \alpha)}{[(N - sp)\, q + sp^2](p - 1)},
\end{equation*}
and
\begin{equation*}
\alpha_0 = \frac{N - sp^2}{N - sp}, \qquad \gamma_q = \frac{(N - sp)(p - q)}{Np - (N - sp)(p - q)}.
\end{equation*}
Then
\begin{equation} \label{47}
\norm{(1 - t)\, w + tv_0}^p \le (1 - t)^p + t^p + \frac{2 \kappa}{3}\, (1 - t)^{p_s^\ast} + C\, \eps^{sp} \left(\eps^{\beta_1(\alpha)} \kappa^{- \gamma_1} + \eps^{\beta_{p-1}(\alpha)} \kappa^{- \gamma_{p-1}}\right)
\end{equation}
by \eqref{46} and \eqref{52}. 
Using $N^2/(N + s) > sp^2$, we fix $\alpha < \alpha_0$ so close to $\alpha_0$ that $\beta_q(\alpha) > 0$ for $q = 0, 1, p - 1, p$. By \eqref{31} and Young's inequality,
\begin{equation} \label{53}
\lambda_k\, (1 - t)^p \pnorm{w}^p \ge (1 - t)^p \left(1 - C\, \eps^{(N-sp)\, \alpha}\right) \ge (1 - t)^p - \frac{\kappa}{3}\, (1 - t)^{p_s^\ast} - C\, \eps^{sp + \beta_0(\alpha)} \kappa^{- \gamma_0}.
\end{equation}
By \eqref{47}, \eqref{49}, and \eqref{53}, the quotient $Q(w,t)$ in \eqref{48} satisfies
\begin{equation} \label{54}
Q(w,t) \le \frac{\big(1 - \lambda_k \pnorm{v_0}^p\big)\, t^p + \kappa\, (1 - t)^{p_s^\ast} + C\, \eps^{sp + \beta(\alpha)} \kappa^{- \gamma}}{\left[(1 - t)^{p_s^\ast} \pnorm[p_s^\ast]{w}^{p_s^\ast} + t^{p_s^\ast} \pnorm[p_s^\ast]{v_0}^{p_s^\ast}\right]^{p/p_s^\ast}},
\end{equation}
where
\[
\beta(\alpha) = \min \set{\beta_0(\alpha),\beta_1(\alpha),\beta_{p-1}(\alpha)} > 0, \qquad \gamma = \max \set{\gamma_0,\gamma_1,\gamma_{p-1}} = \frac{N}{sp} - 1.
\]
As before, the denominator is bounded away from zero if $\eps$ is sufficiently small, so it follows that
\[
\sup_{(w,t) \in E_{\eps^\alpha} \times [0,t_0)}\, Q(w,t) \leq C(t_0^p+\kappa+\eps^{sp+\beta(\alpha)}\kappa^{-\gamma})< S
\]
for some $t_0 > 0$ if $\kappa$ and $\eps$ are sufficiently small. For $t \ge t_0$, rewriting the right-hand side of \eqref{54} as
\[
\frac{\dfrac{1 - \lambda_k \pnorm{v_0}^p}{\pnorm[p_s^\ast]{v_0}^p} + \dfrac{\kappa\, (1 - t)^{p_s^\ast} + C\, \eps^{sp + \beta(\alpha)} \kappa^{- \gamma}}{t^p \pnorm[p_s^\ast]{v_0}^p}}{\left[\dfrac{\pnorm[p_s^\ast]{w}^{p_s^\ast}}{t^{p_s^\ast} \pnorm[p_s^\ast]{v_0}^{p_s^\ast}}\, (1 - t)^{p_s^\ast} + 1\right]^{p/p_s^\ast}}
\]
gives $Q(w,t) \le g((1 - t)^{p_s^\ast})$, where
\[
g(\tau) = \frac{S_{\eps,\eps^\alpha}(\lambda_k) + C \left(\kappa \tau + \eps^{sp + \beta(\alpha)} \kappa^{- \gamma}\right)}{(1 + C^{-1}\, \tau)^{p/p_s^\ast}}, \qquad C=C(N, p, s, t_0).
\]
Since $0 \le (1 - t)^{p_s^\ast} < 1$, then
\[
Q(w,t) \le S_{\eps,\eps^\alpha}(\lambda_k) + C \big(\kappa + \eps^{sp + \beta(\alpha)} \kappa^{- \gamma}\big).
\]
If $S_{\eps_j,\eps_j^\alpha}(\lambda_k) < S/2$ for some sequence $\eps_j \to 0$, then the right-hand side is less than $S$ for sufficiently small $\kappa$ and $\eps = \eps_j$ with sufficiently large $j$, so we may assume that $S_{\eps,\eps^\alpha}(\lambda_k) \ge S/2$ for all sufficiently small $\eps$. Then it is easily seen that if $\kappa \le (p/p_s^\ast)\, S/2\, (C + 1)$, then $g'(\tau) \le 0$ for all $\tau \in [0,1]$ and hence the maximum of $g((1 - t)^{p_s^\ast})$ on $[t_0,1]$ occurs at $t = 1$. So, we reach 
\[
Q(w,t) \le S_{\eps,\eps^\alpha}(\lambda_k) + C\, \eps^{sp + \beta(\alpha)} \kappa^{- \gamma} \le S - \left(\frac{\lambda_k}{C} - C\, \eps^{\beta_p(\alpha)} - C\, \eps^{\beta(\alpha)} \kappa^{- \gamma}\right) \eps^{sp}
\]
by \eqref{40}, and the desired conclusion follows for sufficiently small $\kappa$ and $\eps$.

\subsection{Case 4: $(N^3 + s^3 p^3)/N\, (N + s) > sp^2$, $\bdry{\Omega} \in C^{1,1}$, and $\lambda = \lambda_k$}

By Iannizzotto et al.\! \cite[Theorem 4.4]{IaMoSq}, there exists a constant $C = C(N,\Omega,p,s) > 0$ such that for any $v \in W^{s,p}_0(\Omega)$ with $(- \Delta)_p^s\, v \in L^\infty(\Omega)$,
\begin{equation} \label{55}
|v(x)| \le C \pnorm[\infty]{(- \Delta)_p^s\, v}^{1/(p-1)} d^s(x) \quad \forall x \in \R^N,
\end{equation}
where $d(x) = \dist{x}{\R^N \setminus \Omega}$.

\begin{lemma} \label{Lemma 5}
Assume that $\bdry{\Omega} \in C^{1,1}$. Given $\alpha, \beta > 1$, there exists a constant $C = C(N,\Omega,p,s,\alpha,\beta) > 0$ such that if $B_{\beta r}(0) \subset \set{x \in \Omega : d(x) < \alpha r}$, then for any $v \in W^{s,p}_0(\Omega)$ with $(- \Delta)_p^s\, v \in L^\infty(\Omega)$,
\[
|v(x) - v(y)| \le C \pnorm[\infty]{(- \Delta)_p^s\, v}^{1/(p-1)} |x - y|^s \quad \forall x \in B_r(0),\,  y \in \Omega \setminus B_{\beta r}(0).
\]
\end{lemma}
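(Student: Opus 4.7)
The plan is to derive the inequality directly from the pointwise boundary decay estimate \eqref{55}, by showing that $|v(x)|$ and $|v(y)|$ can each be bounded by $CM|x-y|^s$ with $M := \pnorm[\infty]{(-\Delta)_p^s v}^{1/(p-1)}$; the triangle inequality $|v(x) - v(y)| \le |v(x)| + |v(y)|$ then finishes the argument. The key observation is that the geometric hypothesis $B_{\beta r}(0) \subset \set{x \in \Omega : d(x) < \alpha r}$ already forces both $d(x)$ and $d(y)$ to be comparable to $|x-y|$, with constants depending only on $\alpha$ and $\beta$, so no genuine interior H\"older regularity is required.

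Concretely, I would first note that $\beta > 1$ gives $B_r(0) \subset B_{\beta r}(0) \subset \set{d < \alpha r}$, so $d(x) < \alpha r$ for every $x \in B_r(0)$. For $x \in B_r(0)$ and $y \in \Omega \setminus B_{\beta r}(0)$, the triangle inequality yields $|x-y| \ge |y| - |x| \ge (\beta-1) r$, hence $r \le |x-y|/(\beta-1)$. Using that $d$ is $1$-Lipschitz, one obtains
\[
d(y) \le d(x) + |x-y| < \alpha r + |x-y| \le \bigl(1 + \tfrac{\alpha}{\beta-1}\bigr)\, |x-y|.
\]
Applying \eqref{55} at $x$ and at $y$ then produces
\[
|v(x)| \le C M\, d(x)^s \le C(\alpha,\beta)\, M\, |x-y|^s, \qquad |v(y)| \le C M\, d(y)^s \le C(\alpha,\beta)\, M\, |x-y|^s,
\]
and the triangle inequality packages these into the claimed H\"older bound.

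I do not foresee any real obstacle: the analytic heart of the argument is already encoded in the boundary decay estimate \eqref{55}, which is where the $C^{1,1}$ assumption on $\partial\Omega$ enters (through \cite[Theorem 4.4]{IaMoSq}). Everything else is elementary distance arithmetic controlled by $\alpha$ and $\beta$; in particular, the $1$-Lipschitz property of $d$ spares us from having to select a nearest boundary point to $x$ and then estimate $|y - z|$ by hand.
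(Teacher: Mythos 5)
Your proposal is correct and follows essentially the same route as the paper: split $|v(x)-v(y)|\le|v(x)|+|v(y)|$, apply the boundary decay estimate \eqref{55}, and then bound $d(x)$ and $d(y)$ by $C(\alpha,\beta)\,|x-y|$ using the hypothesis $d(x)<\alpha r$, the lower bound $|x-y|\ge(\beta-1)r$, and the $1$-Lipschitz property of $d$. The only cosmetic difference is that the paper invokes subadditivity $d^s(y)\le d^s(x)+|x-y|^s$ (using $s<1$) rather than first bounding $d(y)$ linearly and then raising to the power $s$; both yield the same type of constant.
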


\begin{proof}
By \eqref{55},
\[
|v(x) - v(y)| \le C \pnorm[\infty]{(- \Delta)_p^s\, v}^{1/(p-1)} (d^s(x) + d^s(y)).
\]
Since $d(x) \le \alpha r$ and $|x - y| \ge (\beta - 1)\, r$,
\[
d(x) \le \frac{\alpha}{\beta - 1}\, |x - y|,
\]
and since $d(y) \le d(x) + |x - y|$ by the triangle inequality and $s < 1$,
\[
d^s(y) \le d^s(x) + |x - y|^s.
\]
So the desired inequality holds with the constant $C\, (2 \alpha^s/(\beta - 1)^s + 1)$.
\end{proof}

\noindent
Let $\eta_\delta$ be the cut-off function in Lemma \ref{Lemma 6}.

\begin{lemma} \label{Lemma 8}
Assume that $\bdry{\Omega} \in C^{1,1}$. Then there exists a constant $C = C(N,\Omega,p,s) > 0$ such that for any $v \in W^{s,p}_0(\Omega)$ such that $(- \Delta)_p^s\, v \in L^\infty(\Omega)$ and $\delta > 0$ such that $B_{6 \theta \delta}(0) \subset \set{x \in \Omega : d(x) < 12 \theta \delta}$,
\begin{equation} \label{56}
\norm{v \eta_\delta}^p \le \norm{v}^p + C \pnorm[\infty]{(- \Delta)_p^s\, v}^{p/(p-1)} \delta^N.
\end{equation}
\end{lemma}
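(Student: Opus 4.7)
The plan is to follow the scheme of Lemma \ref{Lemma 6}, but to exploit the boundary decay \eqref{55} (available thanks to $\bdry{\Omega} \in C^{1,1}$) together with the interior--boundary H\"older estimate of Lemma \ref{Lemma 5} to gain an extra factor $\delta^{sp}$. Write $M := \pnorm[\infty]{(-\Delta)_p^s\, v}$, decompose $\R^{2N}$ using
\[
A_1 = B_{3\theta\delta}(0)^c \times B_{3\theta\delta}(0)^c, \qquad A_2 = B_{4\theta\delta}(0) \times B_{4\theta\delta}(0), \qquad A_3 = B_{3\theta\delta}(0) \times B_{4\theta\delta}(0)^c,
\]
and split $\norm{v\eta_\delta}^p \le I_1 + I_2 + 2 I_3$ accordingly. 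As in Lemma \ref{Lemma 6}, $I_1 \le \norm{v}^p$ trivially.

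The key new ingredient is the following consequence of the hypothesis and of the $1$-Lipschitz character of $d$: since $d(0) < 12 \theta \delta$, for every $z \in B_{12 \theta \delta}(0)$ one has $d(z) \le d(0) + |z| \le 24\theta\delta$, so that by \eqref{55}
\[
|v(z)| \le C\, M^{1/(p-1)}\, \delta^s \qquad \text{for all } z \in B_{12 \theta \delta}(0).
\]
In addition, Lemma \ref{Lemma 5} applied with $r = 3\theta\delta$ and $\beta$ chosen so that $B_{\beta r}(0) \subset \{d < \alpha r\}$ (possible thanks to the hypothesis) gives $|v(x) - v(y)| \le C M^{1/(p-1)}|x-y|^s$ for $x \in B_{3 \theta \delta}(0)$ and $y \in \Omega \setminus B_{4 \theta \delta}(0)$.

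For $I_2$, I would use the pointwise decomposition $v\eta_\delta(x) - v\eta_\delta(y) = \eta_\delta(x)(v(x) - v(y)) + v(y)(\eta_\delta(x) - \eta_\delta(y))$: the second term is $\le C M^{p/(p-1)}\delta^{sp}\, [\eta_\delta]_{s,p}^p \le C M^{p/(p-1)}\delta^N$ by the scaling $[\eta_\delta]_{s,p}^p \sim \delta^{N-sp}$ and the boundary bound; the first term is estimated by applying Lemma \ref{Lemma 7} with a smooth cut-off $\varphi \in C_c^\infty(B_{5\theta\delta}(0); [0,1])$ equal to $1$ on $B_{4\theta\delta}(0)$, so that $\int f v \varphi^p\, dx \le M \cdot C M^{1/(p-1)} \delta^s \cdot \delta^N = C M^{p/(p-1)} \delta^{N+s}$, while the second term in Lemma \ref{Lemma 7} is controlled by splitting the integration in $y$ at $B_{12\theta\delta}(0)$: on $B_{12\theta\delta}(0)$ the pointwise estimate above gives a factor $\delta^{sp}$ that cancels the $\delta^{-sp}$ from the $\varphi$-seminorm, while outside $B_{12\theta\delta}(0)$ one has $\varphi(y) = 0$ and $|x - y| \ge |y|/2$, so the kernel decays fast enough.

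For $I_3$, split $A_3$ into $A_3^{\text{near}} = A_3 \cap \{|y| < 12\theta\delta\}$ and $A_3^{\text{far}} = A_3 \cap \{|y| \ge 12\theta\delta\}$. On $A_3^{\text{near}}$, both $|v(x)|$ and $|v(y)|$ are bounded by $C M^{1/(p-1)} \delta^s$ by the boundary estimate, and $|x-y| \ge \theta \delta$ yields a direct bound of order $\delta^{sp}\cdot \delta^N\cdot \delta^{-sp} = \delta^N$. On $A_3^{\text{far}}$, $|x - y| \ge |y|/2$ and the contribution of $|v(x)|^p$ (with $|v(x)| \le C M^{1/(p-1)} \delta^s$) is easily controlled by $C M^{p/(p-1)}\delta^N$ as in the proof of Lemma \ref{Lemma 6}; for the piece involving $|v(y)|^p$, I would use the H\"older bound from Lemma \ref{Lemma 5} together with $|v(x)| \le C\delta^s \le C|x - y|^s$ on $A_3$ to derive $|v\eta_\delta(x) - v(y)| \le C M^{1/(p-1)}|x - y|^s$, reducing the integrand to $C M^{p/(p-1)} |x-y|^{-N}$ on a region where this is integrable with volume $O(\delta^N)$.

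The main obstacle I expect is precisely this last tail estimate on $A_3^{\text{far}}$: a na\"\i ve bound using only \eqref{55} reproduces the $\delta^{N-sp}$ of Lemma \ref{Lemma 6}, and the improvement to $\delta^N$ hinges on effectively combining the pointwise decay $|v(x)| \lesssim \delta^s$ near $0$ with the H\"older regularity of Lemma \ref{Lemma 5} (the $C^{1,1}$ hypothesis on $\bdry{\Omega}$), exploiting that the $L^p$-weighted integral of $d^{sp}(y)\, |y|^{-N-sp}$ is bounded once one uses that the integration region is confined to $\Omega$ and $|y| \gtrsim \delta$; summing the contributions of $I_1$, $I_2$ and $2 I_3$ then yields \eqref{56}.
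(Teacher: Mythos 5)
Your overall plan --- decomposing $\norm{v\eta_\delta}^p$ over $A_1, A_2, A_3$, invoking the boundary decay \eqref{55} together with the interior--exterior H\"older bound of Lemma~\ref{Lemma 5}, and using the Caccioppoli inequality of Lemma~\ref{Lemma 7} on $I_2$ --- coincides with the paper's, and your treatment of $I_1$ and $I_2$ is essentially the paper's argument. The genuine gap is in $I_3$, precisely where your closing paragraph anticipates trouble, and the remedy you sketch does not close it.

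On $A_3^{\text{far}}$ the bound $|v(x)\eta_\delta(x) - v(y)| \le C M^{1/(p-1)}|x-y|^s$ is correct, but raising it to the $p$th power against the kernel leaves $|x-y|^{-N}$, and since $|x-y| \gtrsim |y|$ there,
\[
\int_{A_3^{\text{far}}} \frac{dx\,dy}{|x-y|^N} \lesssim \delta^N \int_{\{y \in \Omega:\ |y| \ge 12\theta\delta\}} \frac{dy}{|y|^N} \sim \delta^N \log\frac{1}{\delta},
\]
a logarithm too much. Switching to $|v(y)| \le C M^{1/(p-1)} d^s(y)$ does not repair it: $d(y)$ is of order one deep inside $\Omega$, so $d^{sp}(y)\,|y|^{-N-sp}$ is no better than $|y|^{-N}$ away from the boundary, and the same logarithm returns.

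The paper's remedy is algebraic rather than geometric. On $A_3$ one has $\eta_\delta(y) = 1$, so
\[
v(x)\eta_\delta(x) - v(y) = \bigl(v(x) - v(y)\bigr) + v(x)\bigl(\eta_\delta(x) - 1\bigr),
\]
and the elementary inequality $|a+b|^p \le |a|^p + C\bigl(|a|^{p-1}|b| + |b|^p\bigr)$ gives $I_3 \le I_4 + C\,(I_5 + I_6)$ with $I_4 = \int_{A_3} |v(x)-v(y)|^p\,|x-y|^{-N-sp}\,dx\,dy$. Crucially, $I_4$ is never estimated in terms of $\delta$: since $A_1$, $A_3$, and the reflection of $A_3$ are pairwise disjoint, $I_1 + 2I_4 \le \norm{v}^p$. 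Only $I_5$ and $I_6$ need to be $O(\delta^N)$, and these involve at most $|v(x)-v(y)|^{p-1}$, so Lemma~\ref{Lemma 5} yields the kernel $|x-y|^{-N-s}$ rather than $|x-y|^{-N}$; its $\delta^{-s}$ tail is cancelled exactly by the factor $\delta^s$ from \eqref{58} (and in $I_6$ the $\delta^{sp}$ from \eqref{58} cancels the $\delta^{-sp}$ tail), with no logarithm. This device --- absorbing the full difference quotient $|v(x)-v(y)|^p$ into $\norm{v}^p$ so that only strictly sub-$p$th powers of it ever meet the pointwise H\"older bounds --- is the idea your plan is missing; without it the target $\delta^N$ is not reachable by a direct estimate of $I_3$.
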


\begin{proof}
Set $f = (- \Delta)_p^s\, v$ and $K = \pnorm[\infty]{f} < \infty$. Then
\begin{equation} \label{58}
|v(x)| \le CK^{1/(p-1)}\, \delta^s \quad \forall x \in B_{6 \theta \delta}(0)
\end{equation}
by \eqref{55}, and for $k = 3, 5$,
\begin{equation} \label{57}
|v(x) - v(y)| \le CK^{1/(p-1)}\, |x - y|^s \quad \forall x \in B_{k \theta \delta}(0),\, y \in \Omega \setminus B_{(k+1) \theta \delta}(0)
\end{equation}
by Lemma \ref{Lemma 5}. We proceed splitting $\norm{v \eta_\delta}^p$ as in the proof of Lemma \ref{Lemma 6}, and estimate the integral
\[
I_3 = \int_{A_3} \frac{|v(x) - v(y) + v(x)\, (\eta_\delta(x) - 1)|^p}{|x - y|^{N+sp}}\, dx dy
\]
in \eqref{59} using the elementary inequality
\[
|a + b|^p \le |a|^p + C \left(|a|^{p-1}\, |b| + |b|^p\right) \quad \forall a, b \in \R
\]
to get
\[
\begin{split}
I_3 &\le \int_{A_3} \frac{|v(x) - v(y)|^p}{|x - y|^{N+sp}}\, dx dy + C \int_{A_3} \frac{|v(x) - v(y)|^{p-1}\, |v(x)|}{|x - y|^{N+sp}}\, dx dy\\
&\qquad+ C \int_{A_3} \frac{|v(x)|^p}{|x - y|^{N+sp}}\, dx dy =: I_4 + C I_5 + C I_6.
\end{split}
\]
We have $I_1 + 2 I_4 \le \norm{v}^p$. By \eqref{58} and \eqref{57}, and since $|x - y| \ge |y|/4$ on $A_3$,
\begin{align*}
I_5  &\le CK^{p/(p-1)}\, \delta^s \int_{A_3} \frac{dx dy}{|y|^{N+s}} = CK^{p/(p-1)}\, \delta^N, \\
I_6  &\le CK^{p/(p-1)}\, \delta^{sp} \int_{A_3} \frac{dx dy}{|y|^{N+sp}} = CK^{p/(p-1)}\, \delta^N.
\end{align*}
To estimate $I_2$, let $\varphi_\delta$ be as in the proof of Lemma \ref{Lemma 6}. Since $\varphi_\delta = \eta_\delta$ in $A_2$,
\[
I_2 \le C \left(\int_{A_2} \frac{|v(x)|^p\, |\varphi_\delta(x) - \varphi_\delta(y)|^p}{|x - y|^{N+sp}}\, dx dy + \int_{A_2} \frac{|v(x) - v(y)|^p\, \varphi_\delta(y)^p}{|x - y|^{N+sp}}\, dx dy\right) =: C\, (I_7 + I_8).
\]
By \eqref{58} and $\norm{\varphi_\delta}^p=\delta^{N-ps}\norm{\varphi}^p$ by scaling, we get
\begin{equation} \label{61}
I_7 \le CK^{p/(p-1)}\, \delta^{sp} \norm{\varphi_\delta}^p = CK^{p/(p-1)}\, \delta^N.
\end{equation}
By Lemma \ref{Lemma 7},
\[
I_8 \le 2 \int_\Omega f(x)\, v(x)\, \varphi_\delta(x)^p\, dx + C \int_{\R^{2N}} \frac{|v(x)|^p\, |\varphi_\delta(x) - \varphi_\delta(y)|^p}{|x - y|^{N+sp}}\, dx dy =: 2 I_9 + C I_{10}.
\]
Since $\varphi = 0$ outside $B_{5 \theta \delta}(0)$,
\[
I_9 \le \int_{B_{5 \theta \delta}(0)} |f(x)|\, |v(x)|\, dx \le CK^{p/(p-1)}\, \delta^{N+s}
\]
by \eqref{58} again. Changing variables gives
\[
I_{10} = \delta^{N-sp} \int_{\R^{2N}} \frac{|v(\delta x)|^p\, |\varphi(x) - \varphi(y)|^p}{|x - y|^{N+sp}}\, dx dy.
\]
We have $|v(\delta x)| \le CK^{1/(p-1)}\, d^s(\delta x)$ by \eqref{55}, and $d(\delta x) \le d(0) + \delta\, |x| \le C \delta$ since $d(0) \le 6 \theta \delta$ and $\Omega$ is bounded, so the last integral is less than or equal to $CK^{p/(p-1)}\, \delta^{sp} \norm{\varphi}^p$. Hence $I_{10} \le CK^{p/(p-1)}\, \delta^N$.
\end{proof}

\noindent
Since $\bdry{\Omega} \in C^{1,1}$, for all sufficiently small $\delta > 0$, the ball $B_{6 \theta \delta}(0)$ is contained in $\set{x \in \Omega : d(x) < 12 \theta \delta}$ after a translation. Then by Lemma \ref{Lemma 8} and Proposition \ref{Proposition 2},
\[
\norm{v_\delta}^p \le 1 + C \delta^N \quad \forall v \in E,
\]
and using this inequality in place of \eqref{32} in the proof of Proposition \ref{Proposition 3} shows that \eqref{31} can now be strengthened to
\begin{equation} \label{64}
\sup_{w \in E_\delta}\, \Psi(w) \le \lambda_k + C \delta^N.
\end{equation}
Proceeding as in the last subsection, we have to verify \eqref{48} for suitably small $\eps$ and $\delta$.
Since the argument is similar, we only point out where it differs. Let $v \in E$ and let 
$$
w = \pi(v_\delta) = v_\delta/\norm{v_\delta}. 
$$
As noted in the proof of Proposition \ref{Proposition 3}, $\norm{v_\delta}$ is bounded away from zero, so
\[
J_q \le C \int_{A_3} \frac{|v_\delta(x)|^{p-q}\, u_{\eps,\delta}(y)^q}{|x - y|^{N+sp}}\, dx dy,
\]
where $A_3 = B_{2 \theta \delta}(0)^c \times B_{\theta \delta}(0)$. By Lemma \ref{Lemma 5}, \eqref{55}, and Proposition \ref{Proposition 2}, and since $$
|x - y| \ge |x|/2 \ge \theta \delta, \quad\text{on $A_3$},
$$
we get
\begin{align*}
|v_\delta(x)|^{p-q}  &\le |v(x)|^{p-q} \le C \left(|v(x) - v(y)|^{p-q} + |v(y)|^{p-q}\right)  \\
&\le C\, \big(|x - y|^{s(p-q)} + \delta^{s(p-q)}\big)
\le C\, |x - y|^{s(p-q)},
\end{align*}
so
\[
J_q \le C \int_{A_3} \frac{u_{\eps,\delta}(y)^q}{|x|^{N+sq}}\, dx dy \le \frac{C}{\delta^{sq}} \int_{B_{\theta \delta}(0)} U_\eps(y)^q\, dy \le C\, \eps^{\{p\, [(p-q-1)\, N + sq]\, \alpha + (N-sp)\, q\}/p\, (p-1)}.
\]
Then \eqref{52} holds with
\[
\beta_q(\alpha) = \frac{[N^3 + s^3 p^3 - sp^2 N\, (N + s)](p - 1)(p - q) - Np\, (N - sp)[N\, (p - q - 1) + sq](\alpha_0 - \alpha)}{(N - sp)[(N - sp)\, q + sp^2](p - 1)},
\]
and so does \eqref{53} by \eqref{64}. Using 
$$
(N^3 + s^3 p^3)/N\, (N + s) > sp^2, 
$$
we fix $\alpha < \alpha_0$ so close to $\alpha_0$ that $\beta_q(\alpha) > 0$ for $q = 0, 1, p - 1, p$ and proceed as before.

\def\cdprime{$''$}

\bigskip
\bigskip

\end{document}